\newtheorem{thm}{Theorem}[section]
\newtheorem{theorem}{Theorem}[section]
\newtheorem{cor}[thm]{Corollary}
\newtheorem{prop}[thm]{Proposition}
\newtheorem{fact}{Fact}
\newtheorem{quest}[thm]{Question}
\newtheorem{remark}{Remark}
\newtheorem{lemma}[thm]{Lemma}
\newtheorem{obs}{Observation}
\theoremstyle{definition}
\newtheorem{definition}{Definition}
\numberwithin{equation}{section}
\def\a{\alpha}
\def\epsilon{\varepsilon}
\def\eps{\varepsilon}
\def\e{\varepsilon}
\def\V{\mathcal{V}}
\def\dir{{\bf dir}}
\def\l{\ell}
\def\C{\mathbb{C}}
\def\ol{\overline}
\def\DB{{\rm DesBot}}
\def\cV{\mathcal{V}}
\begin{document}
\title[Excedance  set asymptotics]{Asymptotics of the extremal excedance  set statistic}

\author[R. F. de Andrade]{Rodrigo Ferraz de Andrade}
\address{Dept. of Mathematics, Purdue University, West Lafayette, IN 47906}
\email{rferrazd@math.purdue.edu}

\author[E. Lundberg]{Erik Lundberg}
\address{Dept. of Mathematical Sciences, Florida Atlantic University, Boca Raton, FL 33431}
\email{elundber@fau.edu}

\author[B. Nagle]{Brendan Nagle}\thanks{
The third author was partially supported by NSF Grant DMS~1001781.}  
\address{Dept. of Mathematics and Statistics, 
University of South Florida,  Tampa, FL
33620}
\email{bnagle@math.usf.edu}

\begin{abstract}
For non-negative integers $r$ and $s$ with $r + s = n-1$, let $[b^ra^s]$ denote the number of permutations
$\pi \in S_n$ which have the property that $\pi(i) > i$ if, and only if, $i \in [r] = \{1, \dots, r\}$.  
Answering a question of Clark and Ehrenborg (2010), we determine asymptotics on $[b^r a^s]$ when $r = \lfloor (n-1)/2 \rfloor$:
$$
\left[b^{\lfloor (n-1)/2 \rfloor} a^{\lceil (n-1)/2 \rceil}\right]
= 
\left( \frac{1}{2 \log 2\sqrt{  (1 - \log 2)}} + o(1) \right) \left( \frac{1}{ 2 \log 2 } \right)^n n!.  
$$
We also determine asymptotics on $[b^r a^s]$ for a suitably related $r = \Theta (s) \to \infty$.   
Our proof depends on multivariate asymptotic methods of R. Pemantle and M. C. Wilson.

We also consider two applications of our main result.  
One, we determine asymptotics on the number of permutations $\pi \in S_n$ which simultaneously avoid the vincular patterns 
21-34 and 34-21, i.e., for which $\pi$ is order-isomorphic to neither (2,1,3,4) 
nor $(3,4,2,1)$ on any coordinates $1 \leq i < i+1 < j < j+1 \leq n$.  
We also determine asymptotics on the number of $n$-cycles $\pi \in C_n$ which avoid stretching pairs, i.e., those for which $1 \leq \pi(i) < i < j < \pi(j) \leq n$.   \\

{\it Keywords}: excedance, exceedance, descent, vincular pattern, generalized pattern, pattern avoidance, combinatorial dynamics, asymptotic enumeration, multivariate asymptotics
\end{abstract}

\maketitle

\section{Introduction}

The main results of this paper (Theorems \ref{thm:diag} and \ref{thm:multivar} below) consider the asymptotics of the number of permutations $\pi \in S_n$ with excedance word $b^ra^s$, where $r + s = n-1$.  
To present our results, we use the following notation and terminology, taken mostly from~\cite{ClEhr}.

\subsection{Preliminaries}  
Let $S_n$ denote the set of permutations on $[n] = \{1, \dots, n\}$.  
For $\pi \in S_n$ and $i \in [n-1]$, we say that $i$ is an {\it excedance} of $\pi$ if $\pi (i) > i$.  We write
$E(\pi) = \{i \in [n-1]: \pi (i) > i \}$ for the set of excedances of $\pi$.  
We define the {\it excedance word} $w(\pi) = (w_1,\dots, w_{n-1}) \in \{a, b\}^{n-1}$ of $\pi$ by, for each $i \in [n-1]$, 
$w_i = b$ if $i \in E(\pi)$ and $w_i = a$ if $i \not\in E(\pi)$, i.e., $\pi(i) \leq i$.    
For a word $w \in \{a, b\}^{n-1}$, let $[w]$ 
denote the number of permutations $\pi \in S_n$ that have excedance word $w$.  

In this paper, we consider words $w \in \{a, b\}^{n-1}$ of the following form.  
For non-negative 
integers 
$r + s = n-1$, let $b^r a^s \in \{a, b\}^{n-1}$ denote the word whose first $r$ coordinates equal $b$ and whose last $s$ coordinates equal $a$.  
R.~Ehrenborg and E.~Steingr\'{\i}msson~\cite{EhrSt}   
showed that $[b^ra^s]$ is maximized when $r = n - 1 - s \in \{ 
\lfloor (n-1)/2 \rfloor, \lceil (n-1)/2 \rceil \}$.  
E.~Clark and R.~Ehrenborg~\cite[Concluding Remarks]{ClEhr} posed the following problem.  
\begin{quest}\label{Q}
What are the asymptotics of 
 $[b^{\lfloor (n-1)/2 \rfloor} a^{\lceil (n-1)/2 \rceil}]$ as $n \rightarrow \infty$?
\end{quest}

In this paper, 
we address Question~\ref{Q} (see Theorem \ref{thm:diag} below), and more generally, in Theorem \ref{thm:multivar} we address the asymptotics of $[b^ra^s]$ for a wider range of $r + s = n-1$.  
We also provide some further applications of our main results.  

\subsection{Historical remark}

An important counterpart of the notion of an excedance is that of a \emph{descent}. 
For $\pi \in S_n$ and $i \in [n-1]$, we say that $i$ is a {\it descent} of $\pi$ if $\pi (i) > \pi(i+1)$.
Two of the classically studied permutation statistics are the number of descents and the number of excedances (both considered by MacMahon \cite{MacMahon}).
The more modern study of permutation statistics has included the dual consideration of 
the number of permutations with prescribed descent set (and, respectively, excedance set).
In particular, Niven \cite{Niven} and, indepedently, de Bruijn \cite{deBruijn}
showed that the most common descent set is realized by the \emph{alternating permutations}.
The alternating permutations are enumerated by the Euler numbers $E_n$ whose asymptotics are classical: 
\begin{equation}\label{eq:Andre}
 E_n = \left(\frac{4}{\pi} + o(1) \right) \left(\frac{2}{\pi}\right)^n n! 
\end{equation}
(This result follows from Andr\'e's Theorem \cite{Andre}.)
The most common excedance set was specified by the above-mentioned result \cite{EhrSt},
and the excedance set counterpart of equation (\ref{eq:Andre}) is provided by Theorem \ref{thm:diag} below.

\subsection{Main Results}  
Our first result provides an answer to Question~\ref{Q}.  

\begin{thm}\label{thm:diag}
$$
\left[b^{\lfloor (n-1)/2 \rfloor} a^{\lceil (n-1)/2 \rceil}\right]
= 
\left( \frac{1}{2 \log 2\sqrt{  (1 - \log 2)}} + o(1) \right) \left( \frac{1}{ 2 \log 2 } \right)^n n!.  
$$
\end{thm}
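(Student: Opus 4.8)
The plan is to obtain an exact generating-function identity for the numbers $[b^ra^s]$, extract a bivariate rational (or algebraic) generating function, and then apply the multivariate singularity-analysis machinery of Pemantle and Wilson to read off the diagonal asymptotics. First I would recall the known formula for $[b^ra^s]$: by the work of Ehrenborg--Steingr\'{\i}msson and Clark--Ehrenborg, one has an expression of the form $[b^ra^s] = \sum_k (-1)^k \binom{\cdots}{\cdots}(\text{something})^{\cdots}$, which after introducing the exponential generating function $F(x,y) = \sum_{r,s \ge 0} [b^ra^s]\, \frac{x^{r}}{r!}\frac{y^{s}}{s!}$ (or, better, the appropriate normalization dividing by $n!$) collapses to something like $F(x,y) = \frac{1}{\,e^{-x} + e^{-y} - 1\,}$ or a close relative. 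The precise closed form is the first thing to pin down, since everything downstream depends on the location of its singular variety.

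Next I would set $G(z) = \sum_n \big[b^{\lfloor (n-1)/2\rfloor} a^{\lceil (n-1)/2\rceil}\big]\frac{z^n}{n!}$, which is (up to trivial bookkeeping for the floor/ceiling parity) the \emph{main diagonal} of $F(x,y)$ under $x = y = z$, i.e. $G(z) \approx F(z,z)$, and more carefully extract $[z^n/n!]$ of the diagonal sequence $[b^{k}a^{k}]$ and $[b^{k}a^{k+1}]$. The dominant singularity of $F(z,z) = 1/(2e^{-z}-1)$ is at $z_0 = \log 2$, a simple pole. A univariate transfer theorem then gives $[b^ka^k]/(2k+1)! \sim C\, (\log 2)^{-(2k+1)}$, and one reconciles $2k$ or $2k+1$ versus $n$ and the factor $n!$ to produce the base $1/(2\log 2)$ raised to the $n$. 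The constant $\frac{1}{2\log 2\sqrt{1-\log 2}}$ should emerge from the residue of $F$ at the pole together with a $1/\sqrt{n}$ correction coming from the central-limit-type concentration: because we are on the \emph{diagonal} $r \approx s$ and the bivariate generating function has a smooth point on its singular variety there, the two-variable asymptotics of $[b^ra^s]$ acquire a Gaussian profile in $r-s$, and summing (or rather, selecting the $r=s$ slice) contributes the $\sqrt{1-\log 2}$ in the denominator. This is exactly where the Pemantle--Wilson method does the work: parametrize the singular variety of the denominator $\mathcal{V} = \{e^{-x}+e^{-y}=1\}$ (after unfolding $1/(2e^{-z}-1)$ into a genuine two-variable function), locate the critical (contributing) point $(\log 2, \log 2)$ in the direction $(1,1)$, verify it is a strictly minimal smooth point, and apply their central formula for $[x^ry^s]$ asymptotics, which outputs a leading term of the shape $(\text{residue factor}) \cdot (\text{exponential growth})^{r+s} \cdot (2\pi \cdot r \cdot \mathcal{Q})^{-1/2}$ with $\mathcal{Q}$ a Hessian-type quantity evaluated at the critical point; plugging in $r = s = k$ and simplifying the curvature term yields $1-\log 2$ under the square root.

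The main obstacle, I expect, is twofold. The first difficulty is purely computational but delicate: correctly identifying the two-variable generating function and then correctly evaluating the geometric quantities in the Pemantle--Wilson formula (the parametrization of $\mathcal{V}$ near the critical point, the second-order data giving $\mathcal{Q}$, and the residue/amplitude factor), since a slip in any of these changes the constant. One must also handle the $n!$ normalization and the fact that the combinatorial object is a sequence in $n$ while the natural generating function is in two variables $(r,s)$ with $r+s=n-1$ — the parity split into $n$ even/odd (hence floor vs. ceiling) must be tracked so that both subsequences are shown to have the \emph{same} asymptotic constant. The second, more conceptual difficulty is verifying the hypotheses of the multivariate method: that the critical point is the unique contributing point (minimality on the singular variety in the relevant direction), that it is a smooth (not multiple) point of $\mathcal{V}$, and that there are no other singularities on the boundary torus interfering; establishing strict minimality typically requires a short argument ruling out other zeros of $e^{-x}+e^{-y}-1$ of the same or smaller modulus, which I would do by an elementary analysis of $|e^{-x}|+|e^{-y}|$ on the candidate torus. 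Once these hypotheses are checked, the stated formula follows by direct substitution into the Pemantle--Wilson asymptotic expansion and a standard transfer from coefficient asymptotics of $F$ to the EGF-normalized count $[b^ra^s] = n! \cdot [x^ry^s]F$.
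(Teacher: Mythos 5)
Your overall route is the paper's own: take the bivariate generating function coming from Clark--Ehrenborg, apply the Pemantle--Wilson smooth-point formula at the strictly minimal critical point $(\log 2,\log 2)$ of the variety $e^{-x}+e^{-y}=1$ in the direction $(1,1)$ (the $\sqrt{1-\log 2}$ indeed comes from the Hessian-type quantity $Q$), and convert the $r!\,s!$-normalized coefficients to the stated $n!$ form via Stirling, treating even and odd $n$ separately. However, two steps of your plan would fail as written. First, the generating function is not $1/(e^{-x}+e^{-y}-1)$: Clark and Ehrenborg give $\sum_{r,s}\frac{[b^ra^s]}{r!s!}x^ry^s=\frac{e^{-x-y}}{(e^{-x}+e^{-y}-1)^2}$, whose denominator vanishes to order \emph{two} on the variety, so the smooth-point (order-one) theorem you invoke does not apply to it directly. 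The paper's fix is to recognize this series as $\partial_x\bigl(e^{-y}/(e^{-x}+e^{-y}-1)\bigr)$ and apply Pemantle--Wilson to the inner function, whose coefficients $A_{r,s}$ satisfy $[b^{r-1}a^s]=r!\,s!\,A_{r,s}$; the resulting index shift must then be carried through the diagonal specialization. Second, the univariate detour is not legitimate: setting $x=y=z$ gives $\sum_n z^n\sum_{r+s=n}\binom{n}{r}\frac{[b^ra^{n-r}]}{n!}$, i.e.\ binomially weighted row sums, so no transfer theorem applied to $1/(2e^{-z}-1)$ yields asymptotics for the diagonal terms $[b^ka^k]/(k!)^2$ or $[b^ka^k]/(2k+1)!$. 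One must work genuinely bivariately, as your second paragraph ultimately proposes; moreover, for odd $n$ the critical point is not exactly $(\log2,\log2)$ but $\bigl(f^{-1}(s/r),f^{-1}(r/s)\bigr)$, and since $x^{-r}y^{-s}$ is sensitive to $O(1/n)$ shifts one needs a first-order (Taylor) expansion of $f^{-1}$ near $1$ to see that the product $x^ry^s$ is still $(1+o(1))\log^n 2$ -- continuity alone does not suffice there.

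A further caveat concerns minimality. Strict minimality in the Pemantle--Wilson sense requires that the closed bidisk $\ol{D}(0,x)\times\ol{D}(0,y)$ meet the singular variety only at $(x,y)$; inspecting $|e^{-x}|+|e^{-y}|$ on the boundary torus is not enough, since moduli only bound $|e^{-z}+e^{-w}|$ from above and say nothing about the interior of the bidisk. The paper's argument shows that for $0<a<1$ the minimum of $\Re\,e^{-z}$ over $\ol{D}(0,a)$ is attained only at $z=a$ (maximum principle for the harmonic function $e^{-u}\cos v$ plus a Lagrange-multiplier analysis on the boundary circle), whence $\Re\bigl(e^{-z}+e^{-w}-1\bigr)>0$ at every other point of the bidisk. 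Some argument of this strength (or a nonnegativity/aperiodicity argument tailored to this $H$) is needed before the asymptotic formula may be quoted, and it is the main nontrivial verification your sketch leaves open.
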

\noindent We prove Theorem~\ref{thm:diag} in Section~\ref{sec:diag}.

More generally, we establish bivariate asymptotics for $[b^{r-1} a^s]$ when 
the parameters 
$r$ and $s$ tend to infinity 
within a certain {\it sector}.  (We define the concept of an {\it $\eps_0$-sector} $S_{\eps_0}$ in equation (\ref{eqn:sector}) below.)
Our result is somewhat technical to present, and we require several considerations.  
First, define the function $Q : \mathbb{R}^+ \times \mathbb{R}^+ \to \mathbb{R}$ by 
\begin{equation}
\label{eqn:functionQ}  
Q(x,y) = xy e^{-x-y} \left[ y e^{-y}+xe^{-x}-xy(e^{-y}+e^{-x}) \right], \quad x, y > 0.  
\end{equation} 
Second, 
define the function $f: \mathbb{R}^+ \to \mathbb{R}^+$ by 
\begin{equation}
\label{eqn:functionf}  
f(t) :=\frac{(1-e^t) \log(1-e^{-t})}{t}. 
\end{equation} 
It is important for us that $f$ is invertible, as is claimed in the following 
fact\footnote{The proof of Fact~\ref{fact1} is standard, but for completeness, we have included a proof in the Appendix.}.      
\begin{fact}
\label{fact1}  
The function 
$f: \mathbb{R}^+ \to \mathbb{R}^+$ 
defined in~(\ref{eqn:functionf}) 
is a strictly decreasing bijection.
\end{fact}  
\noindent Third, 
set 
\begin{multline}
\label{eqn:sector}
\eps_0 = f(1) =
(e-1) (1- \log(e-1)) \approx 0.7881..., \\
\text{and define the {\it $\eps_0$-sector}}   
\quad   
S_{\eps_0} = \left\{ (r, s) \in \mathbb{N} \times \mathbb{N}: \eps_0 \leq \frac{s}{r} \leq \frac{1}{\eps_0} \right\}.   
\end{multline}  
Our main result will provide
asymptotics for $[b^{r-1}a^s]$ for 
$r, s \to \infty$ with
$(r, s) \in S_{\eps_0}$.  
To that end, for each 
$(r, s) \in S_{\eps_0}$, 
set $x = x(r,s) := f^{-1}(s/r)$ and $y = y(r,s) := f^{-1}(r/s)$ (cf.~Fact~\ref{fact1}).  
Our main result is now stated as follows.   

\begin{thm}\label{thm:multivar}
For $(r, s) \in S_{\eps_0}$, let $x = x(r, s)$, $y = y(r, s)$, and $Q(x,y)$ be defined as above.  
Then, the following holds uniformly for $r, s \to \infty$ with $(r, s) \in S_{\eps_0}$:  
$$
 [b^{r-1} a^s] =  r! s! \left(e^{-y} + O(s^{-1/2}) \right) \frac{1}{\sqrt{2\pi}} x^{-r}y^{-s}\sqrt{\frac{y e^{-y}}{sQ(x,y)}}.  
$$
\end{thm}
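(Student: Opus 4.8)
The plan is to realize $[b^{r-1}a^s]$ as the coefficient of a suitable monomial in a bivariate generating function and then apply the multivariate analytic combinatorics machinery of Pemantle and Wilson. First I would recall (or re-derive from the transfer-matrix / inclusion-exclusion description of the excedance set statistic, as in \cite{ClEhr} or \cite{EhrSt}) that the numbers $[b^{r-1}a^s]$, when normalized by $r!\,s!$, are the Taylor coefficients of an explicit rational or meromorphic function $F(X,Y) = \sum_{r,s} \frac{[b^{r-1}a^s]}{r!\,s!} X^r Y^s$; the denominator of $F$ will be (a power of) something like $1 - $ (a simple analytic expression in $e^{-X},e^{-Y}$), with the singular variety $\mathcal{V} = \{F = \infty\}$ smooth. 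The functions $f$, $f^{-1}$, and $Q$ in the statement are manufactured precisely so that, given the target direction $(r,s)$, the point $(x,y) = (f^{-1}(s/r), f^{-1}(r/s))$ is the \emph{critical point} of $\mathcal{V}$ in that direction, i.e.\ the solution of the smooth-point critical-point equations $x\,\partial_x(\text{denominator}) \cdot s = y\,\partial_y(\text{denominator}) \cdot r$ together with $(x,y)\in\mathcal V$; one checks this by a direct computation, and the quantity $Q(x,y)$ is (up to elementary factors) the Hessian determinant of the height function at that critical point, so that $Q(x,y) > 0$ on $S_{\eps_0}$ is exactly the nondegeneracy condition that makes the saddle-point formula valid.

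With that setup, the core of the argument is to apply the standard smooth-point asymptotic formula of Pemantle--Wilson (see their memoir / the Flajolet--Sedgewick-style statement): if $F = G/H^k$ with $H$ smooth at a strictly minimal critical point $(x,y)$ for the direction $(r,s)$, then
\[
[X^r Y^s]\,F \;=\; (x^{-r} y^{-s})\,\frac{C(x,y)}{\sqrt{s}}\bigl(1 + O(s^{-1/2})\bigr),
\]
where $C(x,y)$ is an explicit constant assembled from $G$, the partial derivatives of $H$, and the Hessian $Q$. Matching $C(x,y)$ to the claimed shape $\frac{1}{\sqrt{2\pi}}\,e^{-y}\sqrt{\dfrac{y e^{-y}}{Q(x,y)}}$ is then a bookkeeping computation: evaluate $G$ and $\partial_y H$ at $(x,y)$, express everything through $x,y$ (using $xe^{-x}$, $ye^{-y}$ from the membership $(x,y)\in\mathcal V$), and simplify. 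Multiplying back by $r!\,s!$ gives the stated formula for $[b^{r-1}a^s]$, and the $e^{-y}+O(s^{-1/2})$ absorbs the $(1+O(s^{-1/2}))$ error together with the value of $G$.

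The two points requiring genuine care, rather than routine calculation, are uniformity and minimality. For uniformity in $(r,s)\in S_{\eps_0}$ as $r,s\to\infty$, I would invoke the uniform version of the Pemantle--Wilson estimate over a compact set of directions: the $\eps_0$-sector condition $\eps_0 \le s/r \le 1/\eps_0$ confines the direction to a compact subset of the positive ray, hence confines the critical point $(x,y)$ to a compact subset of $(0,\infty)^2$ (using continuity of $f^{-1}$ and $\eps_0 = (e-1)(1-\log(e-1))$ being chosen so the relevant inverse-function ranges stay bounded away from the boundary), and on that compact set $Q(x,y)$ is bounded away from $0$, so all error constants are uniform. The main obstacle I anticipate is verifying \emph{strict minimality} of the critical point on the polydisc $\{|X| = x, |Y| = y\}$ — i.e.\ that $H(X,Y)\ne 0$ for $|X|\le x$, $|Y|\le y$ except at $(X,Y)=(x,y)$ — uniformly over the sector; this is the hypothesis that legitimizes deforming the Cauchy integral to the torus through $(x,y)$ and picking up a single saddle. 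Because $H$ involves $e^{-X}, e^{-Y}$ rather than polynomials, this amounts to a monotonicity/positivity estimate for a transcendental function of two complex variables on a polydisc, and establishing it cleanly (probably via $|e^{-X}| \le e^{-\operatorname{Re}X}$ and a one-variable argument) is where I expect to spend the most effort; the role of $\eps_0$ is likely to enter here as well, guaranteeing there is no competing singularity on the boundary torus.
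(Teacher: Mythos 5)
Your plan follows the paper's route essentially step for step: the Clark--Ehrenborg bivariate EGF, the critical-point equations on the singular variety $\mathcal{V}=\{e^{-x}+e^{-y}-1=0\}$ solved by $x=f^{-1}(s/r)$, $y=f^{-1}(r/s)$, positivity of $Q$ and compactness of the image of the sector for uniformity, and then the Pemantle--Wilson smooth-point formula. Two remarks, one small and one substantive. The small one: the EGF of $[b^ra^s]$ has denominator $(e^{-x}+e^{-y}-1)^2$, while the version of the Pemantle--Wilson theorem used here requires the denominator to vanish to order one; the paper resolves your ``(a power of)'' hedge by observing that the EGF is $\partial_x\bigl(e^{-y}/(e^{-x}+e^{-y}-1)\bigr)$, so that $[b^{r-1}a^s]=r!\,s!\,A_{r,s}$ with $A_{r,s}$ the coefficients of the simple-pole function $e^{-y}/(e^{-x}+e^{-y}-1)$ --- this is exactly why the theorem is stated for $[b^{r-1}a^s]$ rather than $[b^{r}a^s]$, and if you instead invoke a higher-order-pole formula you must redo the bookkeeping for the constant.

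The substantive gap is the strict-minimality step, which you correctly flag as the crux but for which your hinted tool does not work: $|e^{-X}|\le e^{-\Re X}$ is in fact an equality, and in any case it only bounds $\Re e^{-X}$ from above by $e^{-\Re X}$ and from below by $-e^{-\Re X}$, which gives nothing useful for showing $H\neq 0$ on the closed bidisk. What is actually needed is the one-variable statement that for $0<a<1$ the minimum of $\Re\, e^{-z}$ over $\overline{D}(0,a)$ is attained \emph{only} at $z=a$; granting this for both coordinates, strict minimality follows because $(z,w)\neq(x,y)$ in the bidisk forces $\Re H(z,w)=\Re e^{-z}+\Re e^{-w}-1> e^{-x}+e^{-y}-1=0$. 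The paper proves this one-variable claim (Lemma~\ref{lemma:strmin}) by the minimum principle for the harmonic function $e^{-u}\cos v$, which pushes the minimizer to the circle $u^2+v^2=a^2$, followed by a Lagrange-multiplier/trigonometric analysis ruling out any boundary solution with $v\neq0$; the hypothesis $a<1$ is used essentially there, and this is precisely where $\eps_0=f(1)$ enters (your guess on the role of $\eps_0$ is right: the sector condition forces $x=f^{-1}(s/r)<1$ and $y=f^{-1}(r/s)<1$ by monotonicity of $f$). Without an argument of this kind your proposal has no proof of the hypothesis that legitimizes the Pemantle--Wilson formula, so this lemma, not the saddle-point bookkeeping, is the real content to supply.
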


\noindent  Theorem~\ref{thm:multivar} is proved in Section~\ref{sec:asymptotic} using 
a result of R.~Pemantle and M.C.~Wilson~\cite{PemWil2002, PemWil2008, PemWil2013}  
concerning multivariate asymptotics.

\subsection{Applications}  
We discuss two applications
of 
Theorem~\ref{thm:diag}.   
The first application falls within  
the well-studied area of 
{\it pattern avoidance}.  (Namely, we consider the simultaneous avoidance of a certain pair of vincular patterns). 
The second concerns 
objects related to vincular patterns, which have become known as {\it stretching pairs}.  
Initially, 
it will not be obvious how Theorem~\ref{thm:diag} provides the applications (see the two following corollaries) we state, but we will make this connection clear at the end of the Introduction.

\subsubsection{Vincular patterns}  
For $1 \leq k \leq n$, fix $\pi_0 \in S_k$ and $\pi \in S_n$.  We say that $\pi$ contains $\pi_0$ as a {\it pattern} 
if there exist $1\leq \l_1 < \dots < \l_k \leq n$ 
such that $(\pi(\l_1), \pi(\l_2), \dots , \pi(\l_k))$ is order-isomorphic to $\pi_0$.  
Now, write $\pi_0 = (a_1, \dots, a_k) = (\pi_0(1), \dots, \pi_0(k))$, and let 
$\pi_0^* = (a_1, \eps_1, a_2, \eps_2, \dots, \eps_{k-1}, a_k)$ be any sequence where, for each $1 \leq i \leq k$, $\eps_i$ is either a dash `$-$', or the empty string.  
We say that $\pi \in S_n$ admits $\pi_0^*$ as a {\it vincular pattern} 
(also known as a \emph{generalized pattern}\footnote{The notion was introduced by Babson and Steingrimsson \cite{BabsonSteingrimsson}.  
The terminology \emph{vincular pattern} first appeared in \cite{BCDK}.})
if $\pi_0$ occurs as a pattern at the positions 
$1 \leq \l_1 < \dots < \l_k \leq n$, and for all $1 \leq i \leq k$, 
$\eps_i \not= -$ implies $\l_{i+1} = \l_i + 1$, i.e., $\l_i, \l_{i+1}$ are consecutive in $\pi$.

As an illustrative example, we note that the permutation $(3,5,2,4,1) = 35241$ 
contains $132$ as a classical pattern (realized uniquely by the $3$, $5$, and $4$ occuring in that order).
However, $35241$ does not contain $1$-$32$ as a vincular pattern, since the $5$ and $4$ are not adjacent.

For $\pi_0 \in S_k$ and $\pi_0^*$ as above, write 
$\a_n(\pi_0^*)$ for the number of permutations $\pi \in S_n$ avoiding $\pi_0^*$ as a 
vincular pattern.  
Elizalde and Noy~\cite{EliNoy} 
studied the limiting behavior of $(\a_n(\pi_0^*)/n!)^{1/n}$ for several `dashless' (consecutive) vincular patterns $\pi_0^*$ of length 3.  For example, they showed that for 
$\pi_0^* = 123 = (1,2,3)$ (with no dashes), one has $(\a_n(123)/n!)^{1/n} \to 3 \sqrt{3}/(2\pi)$ as $n \to \infty$.  
Elizalde \cite{Eli} showed that for $\pi_0^* = \text{1-23-4} = (1, -, 2, 3, -, 4)$, one has $(\a_n (\text{1-23-4})/n!)^{1/n} \to 0$ as $n \to \infty$.  
In~\cite{CLN2013}, it was shown that for all sufficiently large even integers $n$, 
$(\a_n (\text{$21$-$34$})/n!)^{1/n} \geq 1/2 + o(1)$,   
which answered a question of S.~Elizalde~\cite{Eli}.  Using Theorem~\ref{thm:diag}, we are able to improve this last result by a factor of $1/\log 2$.  To that end, 
let $\a_n (\{\text{\rm 21-34}, \text{\rm 34-21}\})$ denote the number of permutations avoiding both \text{\rm 21-34} and \text{\rm 34-21} as vincular patterns.  

\begin{cor}
\label{cor:simult}
$$
\lim_{n\to \infty}  
\left(
\frac{
\a_n (\{\text{\rm 21-34}, \text{\rm 34-21}\})
}{n!} 
\right)^{1/n}  
= 
\frac{1}{2 \log 2} \approx 0.7213...
$$ 
As such, 
$$
\a_n (\text{$21$-$34$}) \geq 
\a_n (\{\text{\rm 21-34}, \text{\rm 34-21}\})
= 
\left(\frac{1}{2 \log 2} + o(1)  \right)^n n!.  
$$
\end{cor}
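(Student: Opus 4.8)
The plan is to reduce the count $\a_n(\{\text{21-34},\text{34-21}\})$ to an extremal excedance-set count $[b^ra^s]$ to which Theorem~\ref{thm:diag} applies, and then read off the limit $1/(2\log 2)$ directly. First I would establish a bijective (or near-bijective) dictionary between permutations $\pi\in S_n$ that simultaneously avoid both generalized patterns and a combinatorial object controlled by an excedance word. The key observation is that avoiding $\text{21-34}$ and $\text{34-21}$ forces strong monotonicity constraints on the ``ascent/descent of adjacent pairs'' relative to later values: a pair of consecutive positions $i,i+1$ with $\pi(i)>\pi(i+1)$ (a descent) cannot be followed later by an ascent-pair whose values both exceed $\pi(i)$, and symmetrically. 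I expect this to pin the descent structure of $\pi$ down to the ``single block of descents then single block of ascents'' shape, i.e.\ the descent word of $\pi$ (or of a canonically associated permutation) is of the form $b^ra^s$ with $r+s=n-1$; this is precisely where the excedance-word formalism of~\cite{ClEhr, EhrSt} and the identity behind Theorem~\ref{thm:diag} enter, via the standard correspondence between descent-set and excedance-set enumeration. The connection alluded to at the end of the Introduction (``we will make this connection clear'') is exactly this dictionary, so I would invoke it to write $\a_n(\{\text{21-34},\text{34-21}\})$ as a sum of at most $O(n)$ terms of the form $[b^ra^s]$.

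Next I would control that sum. By the Ehrenborg--Steingr\'{\i}msson result quoted in the excerpt, $[b^ra^s]$ over $r+s=n-1$ is maximized (up to the floor/ceiling ambiguity) at $r=\lfloor(n-1)/2\rfloor$, so the sum is at least one extremal term and at most $n$ times the extremal term. Taking $n$th roots, the $O(n)$ polynomial factor is harmless: $n^{1/n}\to 1$. Hence
\begin{equation}
\left(\frac{\a_n(\{\text{21-34},\text{34-21}\})}{n!}\right)^{1/n}
= \left(\frac{[b^{\lfloor(n-1)/2\rfloor}a^{\lceil(n-1)/2\rceil}]}{n!}\right)^{1/n}(1+o(1)).
\end{equation}
Plugging in Theorem~\ref{thm:diag}, the right-hand side is $\bigl((c+o(1))(2\log 2)^{-n}\bigr)^{1/n}$ with $c$ the explicit constant, and since $c^{1/n}\to1$ this limit equals $1/(2\log 2)\approx 0.7213\ldots$, giving the first display. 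The second display is then immediate: $\a_n(\text{21-34})\ge\a_n(\{\text{21-34},\text{34-21}\})$ trivially since avoiding both is more restrictive than avoiding one, and the asymptotic form $\bigl(1/(2\log 2)+o(1)\bigr)^n n!$ is just a restatement of the limit of $n$th roots together with the fact that $[b^{\lfloor(n-1)/2\rfloor}a^{\lceil(n-1)/2\rceil}]$ itself has this form by Theorem~\ref{thm:diag} (absorbing the constant and the polynomial slack into the $o(1)$ in the base).

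The main obstacle is the first step: proving that simultaneously avoiding $\text{21-34}$ and $\text{34-21}$ is genuinely equivalent (or equivalent up to a subexponential factor, which is all we need) to a single-descent-block condition encoded by the word $b^ra^s$. One must argue carefully that the pair of forbidden generalized patterns, which each constrain a descent-pair followed by an ascent-pair (and vice versa) with an interleaving value condition, collectively collapse the admissible descent configurations; there is a subtlety because the patterns only forbid $\l_i,\l_{i+1}$ consecutive within each block (the dash allows a gap between the two blocks), so one must check that no ``staircase'' of alternating small blocks survives. I would handle this by an extremal/structural argument: suppose $\pi$ has two descents separated by an ascent (or the mirror image) and locate values violating one of the two patterns, deriving a contradiction unless the descent set is an initial segment; the edge cases near the ends of the permutation and the floor/ceiling bookkeeping are routine once the structure theorem is in place.
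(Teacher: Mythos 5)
The asymptotic bookkeeping in your second and third sentences of the second paragraph is sound and is essentially how the paper finishes (extremal term of Ehrenborg--Steingr\'{\i}msson, a polynomial factor killed by the $n$-th root, then Theorem~\ref{thm:diag}); the trivial inequality $\a_n(\text{21-34})\ge \a_n(\{\text{21-34},\text{34-21}\})$ is also handled correctly. The gap is in your first step, which is where all the content lies: the structural claim that avoiding both 21-34 and 34-21 forces the descent (or excedance) word of $\pi$ itself to be of the form $b^ra^s$ is false. Take $\pi=(2,1,5,4,3)\in S_5$: its only adjacent ascent is $(1,5)$ at positions $(2,3)$, so a 21-34 occurrence would need an adjacent descent strictly before position $2$ (impossible), and a 34-21 occurrence would need a later adjacent descent with both values below $\pi(2)=1$ (impossible); hence $\pi$ avoids both patterns, yet its descent set is $\{1,3,4\}$ and its excedance set is $\{1,3\}$, neither an initial segment. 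So the extremal/structural argument you sketch (``two descents separated by an ascent forces a forbidden pattern'') cannot work, and no identification of the avoidance class with a union of excedance classes of $S_n$ itself is available.

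What the paper actually does is route the dictionary through $(n+1)$-cycles rather than through the descent structure of $\pi$: by Proposition~\ref{prop:Cooperetal} there is a bijection $\phi:C_{n+1}\to S_n$ under which the avoidance class $A_n$ corresponds to $C^*_{n+1}\cup E_{n+1}$, where $E_{n+1}$ is the set of $(n+1)$-cycles whose only stretching pairs are ``exceptional'' (those with $\pi(j)=n+1$); an explicit injection $E_{n+1}\to C^*_{n+2}$ then yields the two-sided bound $|C^*_{n+1}|\le\a_n(\{\text{21-34},\text{34-21}\})\le |C^*_{n+1}|+|C^*_{n+2}|$ of Lemma~\ref{lem:connection}(2). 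Note this also shows the correspondence is genuinely not exact (the avoidance class is strictly larger than the image of $C^*_{n+1}$), which is why a clean ``collapse'' statement at parameter $n$ cannot hold and the upper bound must pass to parameter $n+2$. Only then does an initial-segment structure appear: stretching-pair-free, fixed-point-free permutations have excedance set $[\ell]$ (Observation~\ref{first}), and two further bijections --- the map $F$ of Observation~\ref{second} converting excedance sets of cycles into descent-bottom sets, and the Clarke--Steingr\'{\i}msson--Zeng bijection of Lemma~\ref{prop:bij} converting descent-bottom sets back into excedance sets --- give $|C^*_{n+1}|=\sum_{k}[b^ka^{n-1-k}]$, which is the sum of $O(n)$ terms you wanted. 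Your proposal gestures at this dictionary but supplies in its place an argument that a counterexample refutes, so the corollary is not proved as written.
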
  

\noindent We prove Corollary~\ref{cor:simult} at the end of this section.
It is an open problem to determine the exact value of the following limit (assuming that it exists):
$$\lim_{n\to \infty}  
\left(
\frac{
\a_n (\text{$21$-$34$})
}{n!} \right)^{1/n}.$$

\subsubsection{Stretching pairs}  
For a permutation $\pi \in S_n$, we call a pair $1\leq i < j \leq n$ a {\it stretching pair} if 
$\pi(i) < i < j < \pi (j)$.  
For example, in the permutation $\pi = (2, 1, 3, 5, 4) = 21354$, the pair $2 < 4$ is a stretching pair of $\pi$, since 
$\pi(2) = 1 < 2 < 4 < 5 = \pi(4)$.  
Stretching pairs
naturally arise in cyclic permutations 
within the context of 
a well-known result of Sharkovsky~\cite{Shark} in discrete dynamical systems.  More recently, 
stretching pairs in $n$-cycles were studied in~\cite{CLN2013, Lund, OEIS} from a combinatorial point of view.  
Let $C_n \subset S_n$ denote the set of $n$-cycles, and 
let $C_n^*$ denote those $n$-cycles which contain no stretching pairs.  
Theorem~\ref{thm:diag} allows us to 
determine the limiting behavior of $(|C_{n+1}^*|/n!)^{1/n}$.  

\begin{cor}\label{cor:limit}
$$ 
\lim_{n \rightarrow \infty} \left( \frac{|C^*_{n+1}|}{n!} \right)^{1/n} = \frac{1}{2\log 2}. 
$$ 
\end{cor}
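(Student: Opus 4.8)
The plan is to reduce the enumeration of $n$-cycles avoiding stretching pairs to the extremal excedance count $[b^r a^s]$ governed by Theorem~\ref{thm:diag}, and then simply take $n$th roots. The first step is to recall the combinatorial bijection/identity (from~\cite{CLN2013, Lund, OEIS}) that relates $|C_{n+1}^*|$ to permutations in $S_n$ with a prescribed excedance word of the block form $b^r a^s$. Concretely, an $(n+1)$-cycle $\pi$ has a stretching pair exactly when there are indices $i<j$ with $\pi(i)<i<j<\pi(j)$; one shows that avoiding this forces the excedances of the associated permutation (after the standard passage from cycles to permutations by deleting the largest symbol, or an equivalent normalization) to be precisely an initial segment $[r]$ followed by non-excedances, i.e.\ to have excedance word $b^r a^s$ with $r+s=n-1$. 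Summing over all admissible splits $r+s=n-1$ and observing that the dominant contribution comes from the balanced split $r=\lfloor (n-1)/2\rfloor$ (by the Ehrenborg--Steingr\'{\i}msson maximality~\cite{EhrSt}), one gets
$$
[b^{\lfloor (n-1)/2\rfloor}a^{\lceil (n-1)/2\rceil}] \;\le\; |C_{n+1}^*| \;\le\; n\cdot [b^{\lfloor (n-1)/2\rfloor}a^{\lceil (n-1)/2\rceil}],
$$
or an exact identity $|C_{n+1}^*| = \sum_{r+s=n-1}[b^r a^s]$ if the bijection is clean.

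The second step is purely asymptotic. By Theorem~\ref{thm:diag},
$$
[b^{\lfloor (n-1)/2\rfloor}a^{\lceil (n-1)/2\rceil}] = \left(\frac{1}{2\log 2\sqrt{1-\log 2}}+o(1)\right)\left(\frac{1}{2\log 2}\right)^n n!,
$$
so taking $n$th roots and using $(n!)^{1/n}\cdot n^{-1}\to e^{-1}$ together with the fact that both the constant prefactor and any polynomial factor such as $n$ contribute $1$ in the limit, we obtain
$$
\left(\frac{|C_{n+1}^*|}{n!}\right)^{1/n} \longrightarrow \frac{1}{2\log 2}.
$$
The polynomial slack $n$ between the two bounds above is harmless since $n^{1/n}\to 1$, so even the crude two-sided bound suffices; one does not need the exact summation identity for the limit statement, only for a sharper asymptotic.

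The main obstacle is establishing the combinatorial link in the first step: making precise the correspondence between $n$-cycles avoiding stretching pairs and permutations with block excedance word $b^r a^s$. This is where one must invoke or reprove the structural lemma from~\cite{CLN2013} (or~\cite{Lund}) showing that a cyclic permutation avoids all stretching pairs if and only if its excedance set is a contiguous initial interval of $[n-1]$; the cyclic constraint is essential here, since for general permutations avoiding stretching pairs one gets a larger class. Once that structural characterization is in hand, the bound $|C_{n+1}^*|\le n\,[b^{\lfloor(n-1)/2\rfloor}a^{\lceil(n-1)/2\rceil}]$ follows by summing the $\le n$ terms $[b^r a^s]$, each dominated by the balanced one, and the matching lower bound is immediate from a single term. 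Everything after that is routine limit bookkeeping, so I would spend essentially all the write-up effort on the reduction and cite Theorem~\ref{thm:diag} as a black box for the rest.
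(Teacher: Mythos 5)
Your overall strategy coincides with the paper's: sandwich $|C^*_{n+1}|$ between the balanced term $[b^{\lfloor(n-1)/2\rfloor}a^{\lceil(n-1)/2\rceil}]$ and $n$ times it, apply Theorem~\ref{thm:diag}, and take $n$th roots; that asymptotic bookkeeping is correct and is exactly what the paper does. The gap is in the reduction. The structural statement you plan to invoke (a fixed-point-free --- in particular cyclic --- permutation avoids stretching pairs if and only if its excedance set is an initial interval $[\ell]$) is only the easy half; it is Observation~\ref{first} in the paper and takes three lines. What it gives you is $|C^*_{n+1}| = \sum_\ell |\{\pi \in C_{n+1} : \Exc(\pi) = [\ell]\}|$, a count over $(n+1)$-\emph{cycles}. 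But $[b^\ell a^{n-1-\ell}]$ counts \emph{all} permutations of $S_n$ with excedance set $[\ell]$, so you still need the enumerative bridge: for each $\ell$, the number of $(n+1)$-cycles with excedance set $[\ell]$ equals $[b^\ell a^{n-1-\ell}]$. Your sketch of this bridge --- ``the standard passage from cycles to permutations by deleting the largest symbol, or an equivalent normalization'' --- does not deliver it: that map does not translate the excedance set of the cycle into the excedance word of the image, and both directions of your sandwich (the lower bound from a single term just as much as the upper bound from $n$ terms) depend on this equinumerosity.

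The paper fills this in by composing two bijections: the map $F: C_{n+1} \to S_n$ given by $F(\pi)(i) = \pi^{n+1-i}(1) - 1$, which converts ``$\Exc(\pi)=[\ell]$'' for the cycle into ``descent-bottoms set $=[\ell-1]$'' for the image (Observation~\ref{second}), followed by the Clarke--Steingr\'{\i}msson--Zeng bijection $\Phi$ of $S_n$ satisfying $\Exc(\pi) = \DB(\Phi(\pi))$ (Lemma~\ref{prop:bij}), which carries descent bottoms back to excedances within $S_n$. Together these yield the exact identity $|C^*_{n+1}| = \sum_{k=0}^{n-1}[b^k a^{n-1-k}]$ (Statement~(1) of Lemma~\ref{lem:connection}), after which your limit computation goes through verbatim. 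To complete your write-up you must either reproduce such a bijective argument or cite it precisely; the structural characterization of stretching-pair-free cycles alone does not supply the count.
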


\noindent We now proceed to the proofs of Corollaries~\ref{cor:simult} and~\ref{cor:limit}.

\subsection{Proofs of Corollaries~\ref{cor:simult} and~\ref{cor:limit}}  
The following lemma allows us to connect Theorem~\ref{thm:diag} 
to Corollaries~\ref{cor:simult} and~\ref{cor:limit}.  
As we show in Section~\ref{sec:connection}, 
Lemma~\ref{lem:connection}  
is (mostly) a consequence of a result 
of Clarke, Steingr\'{\i}msson, and Zeng~\cite[Proposition 3]{CSZ}.   

\begin{lemma}
\label{lem:connection}
$\,$  
\begin{enumerate}
\item
$|C_{n+1}^*| = \sum_{k=0}^{n-1} [b^k a^{n-1-k}]$.  
\item
$|C_{n+1}^*| 
\leq 
\a_n (\{\text{\rm 21-34}, \text{\rm 34-21}\})
\leq 
|C_{n+1}^*| 
+|C_{n+2}^*|
\leq 
2|C_{n+2}^*|$.  
\end{enumerate} 
\end{lemma}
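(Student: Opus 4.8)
The plan is to derive part~(1) from a known bijective result and then obtain part~(2) by comparing two pattern-avoidance classes against the cycle class counted in part~(1).

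For part~(1), the strategy is to invoke the result of Clarke, Steingr\'{\i}msson, and Zeng~\cite{CSZ}, which (after translating their statistics) produces a bijection $C_{n+1} \to S_n$ sending an $(n+1)$-cycle $\sigma$ to a permutation $\pi \in S_n$ in such a way that the presence of a stretching pair in $\sigma$ corresponds exactly to a descent of the excedance word of $\pi$ away from the extremal shape $b^k a^{n-1-k}$; more precisely, $\sigma \in C_{n+1}^*$ if and only if the excedance word $w(\pi)$ has the form $b^k a^{n-1-k}$ for some $0 \le k \le n-1$. Granting this correspondence, summing over the block length $k$ gives $|C_{n+1}^*| = \sum_{k=0}^{n-1} [b^k a^{n-1-k}]$. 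I expect the main work here to be bookkeeping: carefully matching the stretching-pair condition $\sigma(i) < i < j < \sigma(j)$ with the ``all excedances precede all non-excedances'' shape of $w(\pi)$ under the Clarke--Steingr\'{\i}msson--Zeng map, and checking the index shift between $S_n$ and $C_{n+1}$. This is deferred to Section~\ref{sec:connection} as the excerpt indicates.

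For part~(2), the plan is to exhibit the two outer inequalities via injections and the inner one trivially. The middle quantity $\a_n(\{\text{21-34}, \text{34-21}\})$ counts $\pi \in S_n$ avoiding both generalized patterns. One shows first that every $\pi$ whose excedance word is extremal (shape $b^k a^{n-1-k}$) avoids both $\text{21-34}$ and $\text{34-21}$: an occurrence of either pattern forces a non-excedance position immediately followed by an excedance position, i.e. a factor $ab$ in $w(\pi)$, which is impossible for a word of the form $b^k a^{n-1-k}$. Combined with part~(1) this yields $|C_{n+1}^*| \le \a_n(\{\text{21-34},\text{34-21}\})$. For the upper bound, the idea is to send each $\pi \in S_n$ avoiding both patterns to a permutation (or pair of permutations) counted by $C_{n+1}^* \cup C_{n+2}^*$: a $\pi$ avoiding $\text{21-34}$ and $\text{34-21}$ has an excedance word with at most one ``descent'' (one $ab$-factor), and splitting on whether this descent occurs or not, together with the Clarke--Steingr\'{\i}msson--Zeng correspondence of part~(1) applied at sizes $n+1$ and $n+2$, gives the injection witnessing $\a_n(\{\text{21-34},\text{34-21}\}) \le |C_{n+1}^*| + |C_{n+2}^*|$. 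Finally $|C_{n+1}^*| \le |C_{n+2}^*|$ (so that $|C_{n+1}^*| + |C_{n+2}^*| \le 2|C_{n+2}^*|$) follows from an easy injection $C_{n+1}^* \hookrightarrow C_{n+2}^*$, e.g. inserting the new largest element adjacent to $n+1$ in the cycle in a way that creates no stretching pair.

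The main obstacle will be part~(1): making the Clarke--Steingr\'{\i}msson--Zeng bijection explicit enough to track the stretching-pair statistic and confirm it matches the extremal excedance-word shape, including the correct handling of the boundary position $n$ (which is never an excedance) and the degenerate blocks $k=0$ and $k=n-1$. Once that correspondence is pinned down, part~(2) is a routine combination of the ``no $ab$-factor'' observation with one- and two-element insertion arguments on cycles.
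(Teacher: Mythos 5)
Your part~(1) is essentially the paper's route: the paper likewise reduces to the Clarke--Steingr\'{\i}msson--Zeng bijection (Lemma~\ref{prop:bij}), supplemented by two small observations that your ``bookkeeping'' would have to reproduce -- a fixed-point-free permutation has no stretching pairs iff its excedance set is an initial segment $[\ell]$, and there is an explicit bijection $C_{n+1}\to S_n$ carrying $\Exc(\pi)=[\ell]$ to $\DB=[\ell-1]$ (the CSZ map itself is $S_n\to S_n$, so this extra cycle-to-permutation step is needed). Your closing injection $C^*_{n+1}\hookrightarrow C^*_{n+2}$ for the inequality $|C^*_{n+1}|+|C^*_{n+2}|\le 2|C^*_{n+2}|$ is fine.

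The genuine gap is in part~(2): both bridging claims that relate pattern containment of $\pi$ to the excedance word of the \emph{same} $\pi$ are false. For the lower bound you assert that an occurrence of 21-34 or 34-21 forces an $ab$-factor in $w(\pi)$, so that every permutation with extremal word $b^k a^{n-1-k}$ avoids both patterns; but $\pi=(3,4,1,2,5,6)\in S_6$ has $w(\pi)=b^2a^3$ and contains 21-34 at coordinates $(2,3,5,6)$ (values $4,1,5,6$). For the upper bound you assert that an avoider has at most one $ab$-factor; but $\pi=(1,4,2,5,3)\in S_5$ avoids both 21-34 and 34-21 while $w(\pi)=abab$. The point is that the avoidance statistic does not live on the permutation whose excedance word you control: the correct connection (Proposition~\ref{prop:Cooperetal}, from \cite{CLN2013}) goes through the bijection $\phi:C_{n+1}\to S_n$ that reads the cycle $(n+1\ a_1\ \dots\ a_n)$ as the one-line word $(a_1,\dots,a_n)$; under $\phi$, avoidance of both patterns by $\phi(\pi)$ corresponds to the cycle $\pi$ having no \emph{typical} stretching pairs, so $\a_n=|C^*_{n+1}|+|E_{n+1}|$, where $E_{n+1}$ consists of cycles whose only stretching pairs involve the value $n+1$. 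The lower bound is then immediate, and the upper bound requires an injection $E_{n+1}\hookrightarrow C^*_{n+2}$ (the paper re-routes the preimage of $n+1$ through a new minimal element $0$), not an argument on excedance words of avoiders. Since $\a_n$ strictly exceeds the number of extremal-excedance-word permutations in general, no containment or bijection of the kind you propose can exist, so this step is not a repairable detail but a wrong reduction.
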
  

We now conclude Corollaries~\ref{cor:simult} and~\ref{cor:limit}.  
Indeed, recall that is was proven in~\cite{EhrSt}   
that $[b^ka^{n-1-k}]$ is maximized when $k \in \left\{ \lfloor (n-1)/2 \rfloor, \lceil (n-1)/2 \rceil \right\}$.  As such, we may use Statement~(1) of Lemma~\ref{lem:connection} to infer 
$$
\left[b^{\lfloor (n-1)/2 \rfloor} a^{\lceil (n-1)/2 \rceil}\right]
\leq 
|C_{n+1}^*| \leq 
n \left[b^{\lfloor (n-1)/2 \rfloor} a^{\lceil (n-1)/2 \rceil}\right]
$$
so that 
Corollary~\ref{cor:limit} is now immediate.  Using Corollary~\ref{cor:limit} and Statement~(2) of Lemma~\ref{lem:connection}, Corollary~\ref{cor:simult} is now immediate.

\begin{remark}
\rm
Statement~(1) of Lemma~\ref{lem:connection} 
also provides an exact combinatorial evaluation of $|C_{n+1}^*|$ that may be of independent interest.  
Indeed, Clark and Ehrenborg~\cite{ClEhr} 
showed that for all integers $M, N \geq 0$, 
$$
[b^Ma^N] = \sum_{i\geq 0} \big(S(M+1, i+1) S(N+1, i+1) i! (i+1)! \big), 
$$
where $S(s,t)$, $s, t \in \mathbb{N}$, denotes the Stirling number of the second kind.  
As such, 
$$
|C_{n+1}^*| = \sum_{k=0}^{n-1} 
\sum_{i\geq 0} \big(S(k+1, i+1) S(n-k, i+1) i! (i+1)! \big), 
$$
which complements our result in Corollary~\ref{cor:limit}.  
\end{remark}

\section{Proof of Lemma~\ref{lem:connection}}  
\label{sec:connection}  

First we state a Lemma from~\cite[Proposition 3]{CSZ}.
Define, 
for a permutation $\pi \in S_n$,
$\DB (\pi) = \{\pi(i): \pi(i) < \pi(i-1) \}$ to be the (so-called)
{\em Descent Bottoms Set}.

\begin{lemma}
[Clarke, Steingr\'{\i}msson, 
Zeng~\cite{CSZ}]     
\label{prop:bij}
There exists a bijection $\Phi :S_n \to S_n$ so that for any $\pi \in S_n$,
$E (\pi) = \DB (\Phi(\pi))$.
\end{lemma}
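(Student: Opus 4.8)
The plan is to produce $\Phi$ not by an explicit construction but by matching up fibers, so that only an equidistribution statement is needed. For $T \subseteq [n-1]$ put $e(T) = |\{\pi \in S_n : \Exc(\pi) = T\}|$ and $d(T) = |\{\sigma \in S_n : \DB(\sigma) = T\}|$; note that $\Exc(\pi) \subseteq [n-1]$ and that $n$ is never a descent bottom, so both $\Exc$ and $\DB$ take values in the Boolean lattice $2^{[n-1]}$. If we can show $e(T) = d(T)$ for every $T$, then picking an arbitrary bijection between $\{\pi : \Exc(\pi)=T\}$ and $\{\sigma : \DB(\sigma)=T\}$ for each $T$ and gluing these together yields a bijection $\Phi : S_n \to S_n$ with $\DB(\Phi(\pi)) = \Exc(\pi)$, which is the conclusion of Lemma~\ref{prop:bij}. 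By inclusion--exclusion (Möbius inversion) on $2^{[n-1]}$, the identities $e(T)=d(T)$ for all $T$ are equivalent to the identities $e_{\le}(S) = d_{\le}(S)$ for all $S \subseteq [n-1]$, where $e_{\le}(S) = |\{\pi : \Exc(\pi) \subseteq S\}|$ and $d_{\le}(S) = |\{\sigma : \DB(\sigma) \subseteq S\}|$. Thus it suffices to evaluate these two ``downward'' counts and check they agree.

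Fix $S$ and write $[n-1] \setminus S = \{j_1 < \cdots < j_m\}$, where $m = n - 1 - |S|$. The condition $\Exc(\pi) \subseteq S$ is precisely $\pi(j_k) \le j_k$ for all $k$. Assigning the values $\pi(j_1), \dots, \pi(j_m)$ greedily in this order, there are $j_1$ choices for $\pi(j_1)$, then $j_2 - 1$ for $\pi(j_2)$ (the one previously used value lies in $[j_1] \subseteq [j_2-1]$), and in general $j_k - (k-1)$ for $\pi(j_k)$; the remaining $|S|+1$ positions (those of $S$, together with $n$) are then filled freely. Hence $e_{\le}(S) = (|S|+1)!\,\prod_{k=1}^m (j_k - k + 1)$.

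For the descent-bottoms count, build $\sigma$ by inserting the values $1, 2, \dots, n$ one at a time into a growing word; this enumerates $S_n$, with $w$ available slots at the step inserting $w$. Two observations drive the count: (a) inserting $w$ between consecutive entries $a$ and $b$ changes the immediate predecessor of exactly one entry, namely $b$, replacing it by $w$, which is the current maximum; (b) because the values are inserted in increasing order, once an entry has acquired a larger predecessor it never loses one. Together, (a) and (b) show that an entry $v$ is a descent bottom of the final $\sigma$ if and only if some (necessarily larger) value is inserted immediately in front of $v$ at some later step, and that the only entry which can be turned into a descent bottom at the step inserting $w$ is the entry sitting just after the chosen slot. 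Therefore $\DB(\sigma)$ is disjoint from $[n-1]\setminus S$ if and only if, at every step $w$, the chosen slot avoids the one slot immediately preceding each already-placed element of $[n-1]\setminus S$, of which there are $|([n-1]\setminus S) \cap [w-1]|$. This gives $d_{\le}(S) = \prod_{w=1}^n \big( w - |([n-1]\setminus S)\cap[w-1]| \big)$. Finally, compare the two products factor by factor: the factor of $d_{\le}(S)$ at $w = j_k$ equals $j_k - (k-1)$, and as $w$ ranges over $\{1,\dots,n\}\setminus\{j_1,\dots,j_m\}$ the factors run exactly through $1, 2, \dots, |S|+1$; so the multiset of factors of $d_{\le}(S)$ is $\{1,\dots,|S|+1\} \cup \{\,j_k-k+1 : 1\le k\le m\,\}$, whose product is $e_{\le}(S)$.

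The one genuinely delicate point is the descent-bottoms count: one must verify that the global requirement ``no element of $[n-1]\setminus S$ is ever a descent bottom'' really does decouple, across the insertion steps, into ``avoid these prescribed slots,'' and this is exactly what observations (a) and (b) deliver — an insertion perturbs a single predecessor relation, and, for increasing insertion, descent-bottom status is monotone in time. The reduction in the first paragraph is standard, the excedance count is a routine greedy enumeration, and the equality of the two product formulas is elementary bookkeeping. Alternatively, and more briefly, one may simply invoke the explicit bijection of Clarke, Steingr\'{\i}msson, and Zeng~\cite{CSZ}, which already satisfies $\Exc(\pi) = \DB(\Phi(\pi))$.
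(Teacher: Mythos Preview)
Your proof is correct, but it takes a genuinely different route from the paper's. The paper constructs an explicit $\Phi$: write $\pi$ in standard cycle decomposition (each cycle starting with its minimum, cycles ordered by their minima), reverse each cycle, and erase the parentheses to obtain a one-line permutation $p=\Phi(\pi)$; a short case analysis on whether $j$ sits last in its cycle then verifies $\Exc(\pi)=\DB(p)$. You instead prove the equidistribution $e(T)=d(T)$ by computing the ``downward'' counts $e_{\le}(S)$ and $d_{\le}(S)$ and matching the two products factor by factor, then glue arbitrary bijections on fibers.

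Both approaches suffice for the paper's needs, since Lemma~\ref{prop:bij} is only invoked to equate $|\{\sigma\in S_n:\DB(\sigma)=[k]\}|$ with $|\{\pi\in S_n:\Exc(\pi)=[k]\}|$, which uses nothing beyond the existence of $\Phi$. The paper's construction has the advantage of being canonical and reusable (and traces back to Foata--Sch\"utzenberger); your argument has the virtue of yielding, as a byproduct, the closed form $e_{\le}(S)=d_{\le}(S)=(|S|+1)!\prod_{k}(j_k-k+1)$ without needing to discover the clever bijection. Your identification of the factors of $d_{\le}(S)$ is clean: writing $g(w)=|S\cap[w-1]|+1$, one sees immediately that $g(j_k)=j_k-k+1$ and that $g$ restricted to $S\cup\{n\}$ hits $1,\dots,|S|+1$ exactly once, which is precisely your bookkeeping claim. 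The insertion argument for $d_{\le}(S)$ is also sound; the one point worth stating explicitly is that the ``slot at the end'' is never forbidden (it precedes no entry), so the forbidden slots at step $w$ are indeed distinct and number exactly $|([n-1]\setminus S)\cap[w-1]|$.
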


\noindent We include (for completeness) a proof of 
Lemma~\ref{prop:bij} using a different\footnote{According 
to Ehrenborg and Steingr\'{\i}msson~\cite{EhrSt},
there are several variations of the bijection $\Phi$,
originating with the early work of Foata and Sch\"utzenberger~\cite{FS}.}  
bijection $\Phi$ (which was suggested to us by Emeric Deutsch~\cite{ED}).

\begin{proof}[Proof of Lemma~\ref{prop:bij}]
Define the bijection $\Phi$ as follows.
Let $\pi \in S_n$ be given in its {\it standard cycle decomposition},
$SCD(\pi)$, 
i.e., each cycle begins with its smallest entry and cycles appear
in ascending order according to their initial entries.
Define $p = \Phi(\pi) \in S_n$ to be the permutation which, 
in 
one-line notation 
$p = (p_1, \dots, p_n)$ where $i \mapsto p_i$, 
is obtained by first reversing the order of entries
within each cycle of $\pi$, and then removing all parentheses.
For example, if $\pi = (1 \ 5 \ 2 \ 8)(3 \ 6 \ 7)(4 \ 9)$, then $\Phi(\pi) = (8,2,5,1,7,6,3,9,4)$.

It is easy to show that $\Phi$ is a bijection
by constructing its inverse function.  
Fix $p = (p_1, \dots, p_n) \in S_n$, 
and construct $SCD(\pi)$
for $\pi = \Phi^{-1}(p)$ as follows.
Find the index $k_1$ for which $p_{k_1}=1$.  Take the
consecutively indexed entries from $p_1$ to $p_{k_1}$ and
reverse their order to obtain the first cycle for $\pi$.
Find the index $k_2 > k_1$ for which $p_{k_2}$ is the smallest
element of $[n] \setminus \{p_1,\ldots,p_{k_1}\}$.  
Construct the second cycle for $\pi$ by reversing the order of the entries starting with $p_{k_1+1}$ and ending with $p_{k_2}$.  
Having constructed 
the first $j$ cycles for $\pi$, find the index $k_{j+1}>k_j$ so that $p_{k_{j+1}}$ is the smallest element of $[n] \setminus \{p_1,p_2, \ldots, p_{k_j} \}$, 
and obtain the next cycle by reversing the order of the entries starting with $p_{k_j+1}$ and ending with $p_{k_{j+1}}$.
By construction, $\pi$ will be in standard cycle decomposition and
$\Phi(\pi) = p$.

We now verify that $\Phi$ has the promised property:
for all $\pi \in S_n$, 
$E(\pi) = \DB (p = \Phi(\pi))$.    
To that end,
fix $\pi \in S_n$, and write $p = \Phi(\pi) = (p_1, \dots, p_n)$.  
For $j \in [n]$, we show $j \in E(\pi) \iff
j \in \DB (p)$.  We consider two cases,
depending on 
how $j$ appears in $SCD(\pi)$.  \\

\noindent {\bf Case 1.}  {\it In $SCD(\pi)$,
$j$ does not appear last in its cycle.}
Then, $\pi(j)$ is the next consecutive entry appearing in the same cycle
as $j$.  Set $k = p^{-1}(j)$ so that $p_k = j$.  Then, by the
construction of $\Phi$, $p_{k-1} = \pi(j)$.  Now,
$$
j \in E (\pi) \quad \iff \quad p_{k-1} = \pi(j) > j = p_k \quad \iff \quad
j = p_k \in \DB(p), 
$$
as desired.  

As an example of the argument above, consider again 
\begin{equation}
\label{repeatedexample}  
\pi = (1 \ 5 \ 2 \ 8)(3 \ 6 \ 7)(4 \ 9) \quad \text{and} \quad \Phi(\pi) = (8, 2, 5, 1, 7, 6, 3, 9, 4).  
\end{equation}  
If $j = 1$, then $\pi(1) = 5 > 1$, and so $1 \in E(\pi)$.  Correspondingly, $p(3) = p_3 = 5 > 1 = p_4 = p(4)$, and so 
$1 = p(4) \in \DB(p)$.  Alternatively, if $j = 5$, then $\pi(5) = 2 < 5$, and so $5 \not\in E(\pi)$.  
Correspondingly, $p(3) = p_3 = 5 > 2 = p_2 = p(2)$, and so $5 = p(3) \not\in \DB(p)$. \\

\noindent {\bf Case 2.}  {\it  In $SCD(\pi)$,
$j$ appears last in its cycle.}
Then, $\pi(j)$ is the first entry in the cycle containing $j$.  Since $\pi$
is in standard cycle decomposition, $\pi(j) \leq j$, in which case $j\not\in
E(\pi)$.  If $j$ is in the first cycle of $\pi$, then
$p_1 = j \not\in \DB(p)$, as desired.  Assume, therefore, that
$j$ is not in the first cycle of $\pi$.  Let the cycle immediately preceding
that of 
$j$ begin with $a$, and let the cycle containing $j$ begin with $b = \pi(j)$.
Then, $a < b = \pi(j) \leq j$.  Moreover, in the notation of Case~1, we have $p_{k-1} = a < j = p_k$ (for some $k \in [n]$)  so that
$p_k = j \not\in \DB(p)$, as desired.

As an example of the argument above, consider again the example 
of $\pi$ and $\Phi(\pi)$ in~(\ref{repeatedexample}).    
If $j = 7$, then $\pi(7) = b = 3 < 7$, and so $7 \not\in E(\pi)$.  Correspondingly, $a = 1 = p_4 = p(4)
< 7 = p_5 = p(5)$, and so $7 = p(5) \not\in \DB(p)$.  
\end{proof}

\subsection{Proof of Lemma~\ref{lem:connection}: Statement~(1)}  
To prove Statement~(1) of Lemma~\ref{lem:connection} from Lemma~\ref{prop:bij}, we make the following two observations.  

\begin{obs}\label{first}
Let $\pi \in S_{n+1}$ have no fixed points.  
Then,
$\pi$ admits no stretching pairs
if and only if there exists $\ell \in [n]$ so that
$E (\pi) = [\ell]$.  
\end{obs}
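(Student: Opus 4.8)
The plan is to prove both directions directly, manipulating the defining inequalities of stretching pairs and excedances. Throughout, $\pi \in S_{n+1}$ is fixed-point-free, so for every $i \in [n+1]$ either $\pi(i) > i$ or $\pi(i) < i$ strictly; in particular note that $i = n+1$ is never an excedance and $i=1$ is always an excedance, so the candidate $\ell$ below lies in $[n]$. Recall that a stretching pair is a pair $1 \le i < j \le n+1$ with $\pi(i) < i < j < \pi(j)$; equivalently, $i$ is a non-excedance, $j$ is an excedance, and $i < j$. So $\pi$ admits a stretching pair if and only if there exist a non-excedance $i$ and an excedance $j$ with $i < j$.

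First I would prove the contrapositive of the ``if'' direction in a strong form: $\pi$ admits no stretching pair if and only if every non-excedance lies to the right of every excedance, i.e. $\max E(\pi) < \min\bigl([n+1] \setminus E(\pi)\bigr)$ (with the convention that this holds vacuously if either set is empty, though here neither is since $1 \in E(\pi)$ and $n+1 \notin E(\pi)$). Indeed, if some non-excedance $i$ precedes some excedance $j$, that pair $(i,j)$ is exactly a stretching pair; conversely a stretching pair is exactly such a configuration. Once this equivalence is in hand, the set $E(\pi)$ is an initial segment of $[n+1]$: since $1 \in E(\pi)$, $n+1 \notin E(\pi)$, and no non-excedance precedes an excedance, $E(\pi) = \{1, 2, \dots, \ell\} = [\ell]$ for $\ell = \max E(\pi)$, and $\ell \le n$. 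That gives the forward implication.

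For the reverse implication, suppose $E(\pi) = [\ell]$ for some $\ell \in [n]$. Then the non-excedances are exactly $\{\ell+1, \dots, n+1\}$, so every non-excedance index exceeds every excedance index; hence there is no pair $i < j$ with $i$ a non-excedance and $j$ an excedance, so by the equivalence above $\pi$ has no stretching pair. This completes the proof. I do not expect any genuine obstacle here: the only point requiring a moment's care is the bookkeeping at the endpoints $1$ and $n+1$, which is what forces $\ell \in [n]$ rather than merely $\ell \in \{0,1,\dots,n+1\}$, and this is exactly where fixed-point-freeness is used (it guarantees $1$ is an excedance and $n+1$ is not). The heart of the argument is simply recognizing that ``no stretching pair'' is the literal statement that $E(\pi)$ is downward closed in $[n+1]$.
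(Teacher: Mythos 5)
Your proof is correct and takes essentially the same route as the paper: both rest on the observation that, for a fixed-point-free permutation, a non-excedance preceding an excedance is precisely a stretching pair, so the absence of stretching pairs forces $\Exc(\pi)$ to be an initial segment $[\ell]$ with $1 \leq \ell \leq n$ (the paper phrases this by taking $\ell$ maximal with $[\ell] \subseteq \Exc(\pi)$). One small remark: fixed-point-freeness is used not only for the endpoint bookkeeping you cite at the end, but also to make every non-excedance strict ($\pi(i) < i$), which is what turns a non-excedance-before-excedance configuration into a genuine stretching pair; your opening sentence already secures this, so the argument stands.
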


\begin{proof} Indeed, clearly no $\pi \in S_{n+1}$ whose excedence set takes the form $E (\pi) = [\ell]$ can have stretching pairs.  
Suppose $\pi \in S_{n+1}$ has neither stretching pairs nor fixed points, and let 
$\ell \in [n]$ be the maximum integer for which $[\ell] \subseteq E(\pi)$.  
Then $[\ell] = E (\pi)$, since otherwise $j \in E (\pi) \setminus [\ell]$ would give the stretching pair $\ell+1 < j$.  
\end{proof}  

\begin{obs}
\label{second}
There is a bijection $F: C_{n+1} \to S_{n}$
so that for each
$\pi \in C_{n+1}$ and for each $\l \in [n]$, 
$E (\pi) = [\ell]$ if and only if
$\DB (F(\pi)) = [\ell -1]$.  
\end{obs}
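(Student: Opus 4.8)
The final statement to prove is Observation~\ref{second}: there is a bijection $F: C_{n+1} \to S_n$ so that for each $\pi \in C_{n+1}$ and each $\ell \in [n]$, we have $\mathrm{Exc}(\pi) = [\ell]$ iff $\mathrm{DesBot}(F(\pi)) = [\ell-1]$.

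\medskip

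\noindent\textbf{Proof proposal.} The plan is to build $F$ by the classical \emph{fundamental bijection} (``transition lemma'') between $n$-cycles on $[n+1]$ and arbitrary permutations on $[n]$, and then track how the excedance set of the cycle corresponds to the descent-bottoms set of the image. Concretely, given $\pi \in C_{n+1}$, I would write its cycle starting from the largest element $n+1$, say $\pi = (\,n+1,\; c_1,\; c_2,\; \dots,\; c_n\,)$, meaning $\pi(n+1) = c_1$, $\pi(c_i) = c_{i+1}$, and $\pi(c_n) = n+1$; then define $F(\pi) \in S_n$ to be the word $(c_1, c_2, \dots, c_n)$ read in one-line notation. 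This is a bijection $C_{n+1} \to S_n$ because every permutation of $[n]$ arises as a unique such word, and the cycle is recovered by prepending $n+1$ and closing up.

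\medskip

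\noindent The key computation is to match the two statistics. First I would observe that, because we start the cycle at the maximum $n+1$, every $c_i$ satisfies $c_i \le n < n+1 = \pi(c_n)$, and in fact for $i < n$ the value $\pi(c_i) = c_{i+1}$ while $\pi(c_n) = n+1$; so $c_i \in \mathrm{Exc}(\pi)$ (i.e.\ $\pi(c_i) > c_i$) iff either $i = n$ (since $n+1 > c_n$ always) or $i < n$ and $c_{i+1} > c_i$. Equivalently, $i < n+1$ lies in $\mathrm{Exc}(\pi)$, after relabeling, precisely when the value at the position of $i$ in the cycle word is followed by a larger value (ascent) or is the last letter $c_n$. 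Meanwhile, $\mathrm{DesBot}(F(\pi)) = \{c_{i+1} : c_{i+1} < c_i\}$ records exactly the \emph{bottoms} of descents of the word $(c_1, \dots, c_n)$. The bridge between ``positions of excedances in the cycle'' and ``values that are descent bottoms in the word'' is that $\mathrm{Exc}(\pi) = [\ell]$ is a statement about which \emph{elements} $1, \dots, \ell$ are excedances, which translates (using that the word is a permutation of $[n]$ and the appended maximum) into the complementary condition that $\{\ell+1, \dots, n\}$ are exactly the non-excedances; unwinding the definitions this becomes: the descent bottoms of $F(\pi)$ are exactly $\{1, \dots, \ell-1\}$. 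I expect the cleanest route is to prove the two-sided implication directly: assume $\mathrm{Exc}(\pi) = [\ell]$ and deduce that a value $v$ is a descent bottom of $F(\pi)$ iff $v \le \ell - 1$, and conversely.

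\medskip

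\noindent The main obstacle, and the step needing the most care, is the bookkeeping that converts a statement about the \emph{set of positions} that are excedances of the cycle into a statement about the \emph{set of values} that are descent bottoms of the one-line word — because the fundamental bijection trades ``position'' data for ``value'' data, so a clean set-equality like $[\ell] \mapsto [\ell-1]$ has to survive that transpose. I would handle this by noting that $i$ is an excedance of $\pi$ iff $i = \pi(c_{j})$ for the unique $j$ with $c_{j+1} = i$ (indices cyclically, $c_{n+1} := n+1$) satisfying $c_{j+1} < c_j$ is false, i.e.\ iff $i$ is \emph{not} a descent bottom of the cycle word, \emph{except} that the appended maximum $n+1$ forces the position $c_n$ always to be an excedance; a short check then shows $\mathrm{Exc}(\pi)$ is an interval $[\ell]$ iff the descent bottoms of the word on $[n]$ form the interval $[\ell-1]$, with the shift by one coming precisely from the special role of $c_n$ and $n+1$. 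Once this correspondence is established elementwise, the claimed equivalence follows, and $F$ is the desired bijection.
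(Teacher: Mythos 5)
There is a genuine gap: the bijection you chose does not have the stated property, and the ``short check'' you defer is exactly where the argument breaks. With your map, $F(\pi)=(c_1,\dots,c_n)$ where $c_1=\pi(n+1)$ and $\pi(c_i)=c_{i+1}$, the excedance condition for a value $i=c_m$ reads $\pi(i)=c_{m+1}>c_m$, i.e.\ it is governed by the \emph{successor} of $i$ in the word: $\Exc(\pi)$ is the set of ascent bottoms of the word together with its last letter (equivalently, the non-excedances in $[n]$ are the descent \emph{tops}). Descent bottoms, by contrast, are governed by the \emph{predecessor}: $c_{m}\in\DB(F(\pi))$ iff $c_{m-1}=\pi^{-1}(c_m)>c_m$, so your $\DB(F(\pi))$ records (essentially) the excedance-type statistic of $\pi^{-1}$, not of $\pi$. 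In your write-up this surfaces when you pass to ``the unique $j$ with $c_{j+1}=i$'': that is the predecessor relation, not the one defining $\Exc(\pi)$. Concretely, take $n=2$ and $\pi=(2,3,1)\in C_3$ (one-line notation): then $\Exc(\pi)=[2]$, but your $F(\pi)$ is the word $(1,2)$, whose descent-bottom set is $\emptyset\neq[1]$. For $n=3$, $\pi=(3,4,2,1)\in C_4$ has $\Exc(\pi)=[2]$ while your word is $(1,3,2)$ with $\DB=\{2\}\neq[1]$. So the claimed equivalence fails for your $F$, and composing with $\pi\mapsto\pi^{-1}$ does not repair it either (e.g.\ $\pi=(4,3,1,2)\in C_4$).

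The paper avoids this by using a different map: $p(i)=\pi^{\,n+1-i}(1)-1$, i.e.\ traverse the cycle through $1$ in \emph{reverse} order and shift all values down by one. For that map one gets a clean elementwise identity: writing $j=p(i)$, one has $\pi(j+1)-(j+1)=p(i-1)-p(i)$, hence $j+1\in\Exc(\pi)$ iff $j\in\DB(p)$ for every $j\in[n]$; since $1$ is always an excedance of an $(n+1)$-cycle, the interval statement $\Exc(\pi)=[\ell]\iff\DB(F(\pi))=[\ell-1]$ follows at once. If you want to salvage your approach, you must either switch to a bijection of this reversed/shifted type or restate the correspondence for your map in terms of descent tops (namely $\Exc(\pi)=[\ell]$ iff the descent tops of the word are $\{\ell+1,\dots,n\}$) and then supply a further bijection converting that condition into $\DB=[\ell-1]$; as written, the proposal proves a different statement.
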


\begin{proof}
Indeed, fix $\pi \in C_{n+1}$ and define $F(\pi) = p \in S_n$ by the rule $p(i) = \pi^{n+1-i}(1) - 1$.  For $j \in [n]$, we show $j+1 \in E(\pi)$ if and only if $j \in \DB(p)$.  
Write $j = p(i)$ for some $i \in [n]$ so that 
$j + 1 = p(i) + 1 = \pi^{n+1-i}(1)$ and 
$\pi (j+1)  = \pi^{n+1 - (i-1)}(1) = p(i-1) + 1$.  
Now, $\pi(j+1) - (j+1) = p(i-1) - p(i)$ is positive if, and only if, $j+1 \in E(\pi)$ and if, and only if, $j = p(i) \in \DB(p)$.  
\end{proof}

To prove Statement~(1) of Lemma~\ref{lem:connection}, we note that every $\pi \in C_{n+1}$ is without fixed points, and so Observation~\ref{first} gives the disjoint union
$$
C_{n+1}^* = \bigcup_{\ell=1}^n 
\big\{\pi \in C_{n+1}: E (\pi) = [\l] \big\}  \quad \implies \quad 
\left|C_{n+1}^*\right| = \sum_{\ell=1}^n 
\big|\left\{\pi \in C_{n+1}: E (\pi) = [\l] \right\}\big|.  
$$
Applying Observation~\ref{second} and Lemma~\ref{prop:bij} (in this order) then gives 
$$
\left|C_{n+1}^*\right| = 
\sum_{k=0}^{n-1} \big| \left\{\pi \in S_n: \DB (\pi) = [k] \right\} \big|  
= 
\sum_{k=0}^{n-1} \big| \left\{\pi \in S_n: E (\pi) = [k] \right\} \big|  
= 
\sum_{k=0}^{n-1} 
[b^k a^{n-1-k}], 
$$
as promised.

\subsection{Proof of Lemma~\ref{lem:connection}: Statement~(2)}  
Let $\a_n = \a_n(\{\text{21$-$34}, \text{34$-$21}\})$
denote the number of permutations $\pi \in S_n$ 
which avoid both of the vincular patterns 21$-$34 and 34$-$21.
For an $(n+1)$-cycle $\pi \in C_{n+1}$, we say that a stretching pair $\pi(i) < i < j < \pi (j)$ is {\it typical} if $\pi (j) \neq n+1$, and {\it exceptional} otherwise.  
Let $E_{n+1} \subset C_{n+1}$ denote the set of 
$(n+1)$-cycles $\pi$ whose only stretching pairs are exceptional.  
The following statement\footnote{The bijection $\phi$ in Proposition~\ref{prop:Cooperetal} is easy to state.  If 
$(n+1 \ a_1 \ \dots \ a_n) \in C_{n+1}$ is given in cyclic notation
$n+1 \mapsto a_1 \mapsto \dots \mapsto a_n \mapsto n+1$, 
then $\phi(\pi) = (a_1, \dots, a_n) \in S_n$, which we've written in customary notation $i \mapsto a_i$.}  
is an easy fact from Section~4.1 of~\cite{CLN2013}.

\begin{prop}[Cooper et al.~\cite{CLN2013}]
\label{prop:Cooperetal}  
There is a bijection $\phi: C_{n+1} \to S_n$ 
with the property that 
$\pi \in C^*_{n+1} \cup E_{n+1}$ if and only if 
$\phi(\pi)$ avoids both of the vincular patterns 21$-$34 and 34$-$21.  
\end{prop}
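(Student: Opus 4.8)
\textbf{Proof plan for Proposition~\ref{prop:Cooperetal}.}
The plan is to exhibit the bijection explicitly as the map $\phi$ described in the footnote and then analyze how stretching pairs of an $(n+1)$-cycle $\pi$ translate, under $\phi$, into occurrences of the generalized patterns $\text{21-34}$ and $\text{34-21}$ in $\phi(\pi) \in S_n$. Concretely, for $\pi = (n+1 \ a_1 \ \cdots \ a_n) \in C_{n+1}$ written in cyclic notation, set $p = \phi(\pi) = (a_1, \dots, a_n) \in S_n$; since the cycle visits every element of $[n+1]$, the word $(a_1, \dots, a_n)$ is a permutation of $[n]$, so $\phi$ is a well-defined bijection $C_{n+1} \to S_n$ (its inverse simply prepends $n+1$ and re-reads the word as a cycle). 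The substance of the proof is the translation dictionary between the two combinatorial structures.

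First I would set up notation relating $\pi$ and $p$: for $i \in [n]$ we have $\pi(a_i) = a_{i+1}$ with the convention $a_{n+1} = n+1$ (the cycle closes), and $\pi(n+1) = a_1$. A stretching pair of $\pi$ is a pair $1 \le u < v \le n+1$ with $\pi(u) < u < v < \pi(v)$. I would rule out $u = n+1$ immediately (it cannot be the smaller coordinate) and handle $v = n+1$ as the ``exceptional'' case, where $\pi(v) = \pi(n+1) = a_1$ — note $\pi(v) < v$ automatically here since $a_1 \le n < n+1$, so $v = n+1$ never actually participates in a genuine stretching pair as the \emph{larger}-image coordinate either; this is precisely why $E_{n+1}$, the cycles whose only stretching pairs are exceptional, will in fact coincide with $C^*_{n+1}$ up to the bookkeeping in the statement, and I would state this carefully. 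For $u, v \le n$, write $u = a_i$, $v = a_j$; then $\pi(u) = a_{i+1}$ and $\pi(v) = a_{j+1}$, and the stretching condition $\pi(u) < u < v < \pi(v)$ becomes $a_{i+1} < a_i < a_j < a_{j+1}$ in terms of the word. The key combinatorial point is that the positions $i, j$ need \emph{not} be related in any order, but the \emph{values} $a_{i+1}, a_i, a_j, a_{j+1}$ satisfy $a_{i+1} < a_i < a_j < a_{j+1}$ — this is exactly a configuration of two adjacent descent/ascent pairs: positions $i, i+1$ carry a descent $a_i > a_{i+1}$ and positions $j, j+1$ carry an ascent $a_j < a_{j+1}$, with all four values interleaved as ``low pair below high pair.'' Depending on whether $i < j$ or $j < i$ in position, this yields an occurrence of the dashed pattern $\text{21-34}$ (when the descent-pair precedes the ascent-pair) or of $\text{34-21}$ (when the ascent-pair precedes the descent-pair). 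Conversely, any occurrence of $\text{21-34}$ or $\text{34-21}$ in $p$ — two adjacent pairs of positions, the values forming the pattern $2,1$ then $3,4$ (respectively $3,4$ then $2,1$) — pulls back through $\phi^{-1}$ to a stretching pair of $\pi$, after checking that no occurrence is forced to use position $n$ adjacent to the phantom value $a_{n+1} = n+1$ in a way that breaks the correspondence (here one uses that $n+1$ is the global maximum, so it can only play the role of the ``$4$'' in $\text{34}$, which lands in the exceptional bucket).

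The main obstacle I anticipate is the boundary bookkeeping around the index $n+1$ and the wrap-around value $a_{n+1} = n+1$: one must verify that (a) every \emph{typical} stretching pair of $\pi$ (one with $\pi(j) \ne n+1$) corresponds to a genuine pattern occurrence in $p$ that does not secretly involve the wrap-around adjacency, and (b) conversely, every occurrence of $\text{21-34}$ or $\text{34-21}$ in $p$ gives a stretching pair whose larger-image coordinate $v$ satisfies $v \le n$ or is the exceptional $v = n+1$. Getting this exactly right — so that the biconditional ``$\pi \in C^*_{n+1} \cup E_{n+1} \iff \phi(\pi) \in A_n$'' holds on the nose — is the delicate part; the rest is a routine unwinding of definitions. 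Since the statement is explicitly flagged as ``an easy fact from Section~4.1 of~\cite{CLN2013},'' I would keep the write-up brief, citing that section for the verification and recording only the translation dictionary above together with the definition of $\phi$ and $\phi^{-1}$.
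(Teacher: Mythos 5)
Your overall plan---take $\phi$ to be the map from the footnote, and translate a stretching pair $\pi(u)<u<v<\pi(v)$ with $u=a_i$, $v=a_j$ into the value configuration $a_{i+1}<a_i<a_j<a_{j+1}$ on two adjacent position pairs, which is an occurrence of 21-34 or 34-21 according to whether the descent pair precedes or follows the ascent pair---is exactly the intended argument (the paper gives no proof of Proposition~\ref{prop:Cooperetal}, deferring to Section~4.1 of the cited work, so your dictionary is the right thing to write down). But you have misread which stretching pairs are \emph{exceptional}, and this produces a false intermediate claim. In the paper a stretching pair $\pi(i)<i<j<\pi(j)$ is exceptional when $\pi(j)=n+1$, not when $j=n+1$. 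The case $j=n+1$ indeed never occurs (as you note, $\pi(j)>j$ is then impossible), but that observation does not make $E_{n+1}$ ``coincide with $C^*_{n+1}$ up to bookkeeping.'' The two sets are disjoint and $E_{n+1}$ is genuinely nonempty: for $n+1=4$ the cycle $4\mapsto 2\mapsto 1\mapsto 3\mapsto 4$ has exactly one stretching pair, namely $(2,3)$ with $\pi(3)=4$, so it lies in $E_4\setminus C^*_4$, while $\phi$ of it is $(2,1,3)\in A_3$; thus the statement your setup would be proving, ``$\pi\in C^*_{n+1}$ iff $\phi(\pi)\in A_n$,'' is false. (If $E_{n+1}$ were essentially $C^*_{n+1}$, Statement~(2) of Lemma~\ref{lem:connection} and the injection $E_{n+1}\to C^*_{n+2}$ used to prove it would be pointless.)

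The correct bookkeeping---which your final paragraph gestures at, contradicting your earlier identification---is this: with $v=a_j$ and the convention $a_{n+1}=n+1$, the pair is visible as a pattern occurrence in $p=\phi(\pi)$ exactly when $j+1\le n$, i.e.\ exactly when $\pi(v)=a_{j+1}\neq n+1$, i.e.\ exactly when the pair is \emph{typical}; an exceptional pair has $v=a_n$, its would-be ``4'' is the phantom value $n+1$ at the nonexistent position $n+1$, and so it leaves no trace in $p$. Conversely, every occurrence of 21-34 or 34-21 in $p$ uses only positions of $[n]$ and pulls back to a typical stretching pair. Hence $\phi(\pi)\in A_n$ iff $\pi$ has no typical stretching pairs, i.e.\ iff $\pi\in C^*_{n+1}\cup E_{n+1}$, which is the proposition. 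Two small items you should also record: the adjacent position pairs $\{i,i+1\}$ and $\{j,j+1\}$ are automatically disjoint (if $j=i+1$ then $a_i<a_{i+1}<a_i$, and similarly for $i=j+1$), so one really gets four coordinates $i<i+1<j<j+1$ or $j<j+1<i<i+1$ as the dashed patterns require; and in the 21-34 (resp.\ 34-21) direction of the converse one should name the pulled-back pair explicitly, $u=a_i$, $v=a_j$ (resp.\ $u=a_{j'}$, $v=a_{i'}$), and check the three inequalities, which is immediate.
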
  

Proposition~\ref{prop:Cooperetal} gives the identity $\a_n = |E_{n+1}| + |C^*_{n+1}|$, and hence, $\a_n \geq |C^*_{n+1}|$, which is the lower bound in Statement~(2) of Lemma~\ref{lem:connection}.  
It remains to prove the upper bound.  
We shall establish an injection $\iota : E_{n+1} \to C^*_{n+2}$, in which case $|E_{n+1}| \leq |C^*_{n+2}|$ and the upper bound in Statement~(2) of Lemma~\ref{lem:connection} then follows.

To define $\iota$, it will be convenient to work rather with $C_{n+2}[0, n+1]$, which is the set of $(n+2)$-cycles defined on the elements $[0, n+1] = \{0, 1, \dots, n+1\}$.
We take $C^*_{n+2}[0, n+1]$ to be the set of $\pi \in C_{n+2}[0, n+1]$ with no stretching pairs.  
Now, fix $\pi \in E_{n+1}$, and write $j_0 = \pi^{-1}(n+1)$.  By construction, all stretching pairs $i < j$ in $\pi \in E_{n+1}$ have $j = j_0$.  
Define $p = \iota(\pi) \in C^*_{n+2}[0, n+1]$ by the following rule:  $p(0) = n+1$, $p(j_0) = 0$, and $p(i) = \pi(i)$ for all remaining $i \in [n+1]$.  Clearly, $\iota$ is an injection 
and 
$p = \iota(\pi)$ is an $(n+2)$-cycle on the elements $[0, n+1]$.
Note that 
all stretching pairs of $\pi$ have been eliminated in $p = \iota (\pi)$, and no new ones arise.


\section{Proof of Theorem~\ref{thm:multivar}}
\label{sec:asymptotic}

We begin by outlining the main approach to our proof of Theorem~\ref{thm:multivar}.  To that end, we note a result of 
Clark and Ehrenborg
(see \cite[Thm. 3.1]{ClEhr}) that $[b^ra^s]$ has 
bivariate exponential generating function 
\begin{equation}
\label{eq:BEGF}
\sum_{r,s \geq 0} \frac{[b^r a^s]}{r! s!} x^r y^s = \frac{e^{-x} e^{-y}}{(e^{-x} + e^{-y} - 1)^2}
= 
\frac{\partial}{\partial x} \left( \frac{e^{-y}}{e^{-x} + e^{-y} - 1} \right).
\end{equation}
Now, 
for $(x, y)$ in a neighborhood of $(0,0)$, 
write 
\begin{equation}
\label{eqn:goal}  
\frac{e^{-y}}{e^{-x} + e^{-y} - 1} = \sum_{r, s \geq 0} A_{r,s} x^r y^s.
\end{equation}  
We will apply 
a result of Pemantle and Wilson~\cite{PemWil2002, PemWil2008, PemWil2013}
to the coefficients $A_{r,s}$ when $(r, s) \in S_{\eps_0}$
(cf. equation (\ref{eqn:sector})), which will provide 
asymptotics on these coefficients.  
From there, 
Theorem~\ref{thm:multivar} will follow, since by 
equation (\ref{eq:BEGF}) 
we have 
\begin{equation}
\label{eqn:finalline}
[b^{r-1}a^s] = r! s! A_{r,s}.  
\end{equation}  

While the plan above is straightforward, the details take some work.  Indeed, the result of Pemantle and Wilson
is a highly technical statement, and much of our work will be in showing that it can be applied to the setting we need.
Let us now proceed to the result of Pemantle and Wilson.  

\subsection{Preliminaries and the result of Pemantle and Wilson}   
In all that follows, 
suppose $F: \mathbb{C}^2 \to \mathbb{C}$ is a meromorphic function, where we write $F(x,y) = G(x,y)/H(x,y)$ for some holomorphic functions $G, H : \mathbb{C}^2 \to \mathbb{C}$.  
We
write 
$\cV = \cV_F = \left\{(x, y) \in \mathbb{C}^2: H(x, y) = 0\right\}$    
for the variety of singularities of $F$.  
We say that $H$ {\it vanishes to order one} on $\cV$ if $\nabla H (x, y) \not= \vec{0}$ for each $(x, y) \in \cV$.  
We write ${\bf dir}(x, y) = {\rm span}_{\mathbb{C}} \left\{(x H_x, y H_y)\right\}$.

On the variety $\cV$, we have the following important 
concept of a {\it strictly minimal point}.  

\begin{definition}\label{def:sm}
A point $(x,y) \in \V$ is called \emph{strictly minimal} if the closed bidisk 
$$\ol{D}(0,|x|) \times \ol{D}(0,|y|) = \{ (z,w) \in \C^2 : |z| \leq |x|, |w| \leq |y| \}$$
intersects $\V$ only at the point $(x,y)$.
\end{definition}

The following result of Pemantle and Wilson appeared as Theorem~3.1 in~\cite{PemWil2002}, as Corollary~3.21 in~\cite{PemWil2008}, and as 
Theorem~9.5.7 in~\cite{PemWil2013}.  

\begin{theorem}[Pemantle, Wilson~\cite{PemWil2002, PemWil2008, PemWil2013}]  
\label{thm:PW}
Let $F = G/H : \mathbb{C}^2 \to \mathbb{C}$ be a meromorphic function
with variety of singularities $\cV = \cV_F$.  
Write $F(x, y) = \sum_{r, s \geq 0} A_{r,s} x^r y^s$ outside of $\cV$, and assume $H$ vanishes to order one on $\cV$.  

Fix $\eps > 0$, 
and 
assume that 
for each $(r,s) \in S_{\eps}$ (cf. equation (\ref{eqn:sector})), 
there 
exists 
$(x, y) \in \cV$ so that the following conditions hold:  
\begin{enumerate}  
\item[$(i)$]  
$(r, s) \in \dir(x, y)$, where 
the point $(x, y) \in \cV$ is strictly minimal; 
\item[$(ii)$]  
As $(r, s) \in S_{\eps}$ varies, the point 
$(x, y) \in \cV$ varies smoothly over some compact set; 
\item[$(iii)$]  
The point $(x, y) \in \cV$ satisfies that 
$G(x,y) \not= 0$, 
and also, 
$$
Q(x,y):= -y^2H_y^2xH_x-yH_yx^2H_x^2-x^2y^2(H_y^2H_{xx}+H_x^2H_{yy}-2H_xH_yH_{xy}) \not= 0.  
$$
\end{enumerate}  
Then as 
$r, s \to \infty$ with $(r, s) \in S_{\eps}$, we have 
\begin{equation}\label{eq:PW}
 A_{r,s} = \left( G(x,y) + O(s^{-1/2}) \right) \frac{1}{\sqrt{2\pi}}x^{-r}y^{-s}\sqrt{\frac{-yH_y}{sQ(x,y)}},
\end{equation}
where the error estimate $O(s^{-1/2})$ is uniform over $S_\e$. 
\end{theorem}

\subsection{Deriving Theorem~\ref{thm:multivar} from Theorem~\ref{thm:PW}}  
\label{sec:PWimpliesmultivar}  
From equation (\ref{eq:BEGF}), set 
\begin{multline}
\label{eqn:setup}  
F(x,y) = \frac{e^{-y}}{e^{-x} + e^{-y} - 1}, \quad 
G(x,y) = e^{-y}, \quad 
H(x, y) = e^{-x} + e^{-y} - 1, \\
\text{and} \quad  \cV = \cV_F = \left\{(x, y) \in \mathbb{C}^2: H(x, y) = e^{-x} + e^{-y} - 1 = 0 \right\}.  
\end{multline}  
Clearly, $G$ and $H$ are holomorphic functions so that $F$ is meromorphic with variety of singularities given by $\cV$.  
Clearly, $\nabla H$ is never zero, and therefore $H$ vanishes to order one on $\cV$.

Fix $(r, s) \in S_{\eps_0}$ (cf. equation (\ref{eqn:sector})).   
To apply Theorem~\ref{thm:PW}, we must guarantee the existence of a point $(x, y) \in \cV$ so that Conditions~$(i)$--$(iii)$ 
above hold.  
To find the desired point $(x,y) \in \cV$, consider the equation
\begin{equation}
\label{eqn:findxy}
r \frac{e^x}{x} = s \frac{e^y}{y} \quad \text{on the variety $\cV$}.
\end{equation}  
Using the identity $y = - \log (1 - e^{-x})$ on $\cV$, we see that equation (\ref{eqn:findxy}) holds if, 
and only if, 
$$
y = \frac{s}{r} \cdot \frac{xe^y}{e^x} = \frac{s}{r} \cdot \frac{x}{e^x - 1}.  
$$
Again using $y = - \log (1 - e^{-x})$ on $\cV$, we see that equation (\ref{eqn:findxy}) holds for $x$ satisfying 
\begin{equation}
\label{eqn:findx}  
- \log \left(1 - e^{-x} \right) = \frac{s}{r} \cdot \frac{x}{e^x - 1} \quad \implies \quad \frac{s}{r} = \frac{(1 - e^x) \log (1 - e^{-x} )}{x}.  
\end{equation} 
Recall from equation (\ref{eqn:functionf}) that $f$ is invertible.  

Returning to equation (\ref{eqn:findx}), 
one solution $(x, y) \in \cV$ to equation (\ref{eqn:findxy}) has $x = f^{-1} (s/r) > 0$, 
where $f$ is the function defined by equation (\ref{eqn:functionf}).  
Using symmetric calculations and $x = - \log (1 - e^{-y})$ on $\cV$, we also find that this solution 
has $y = f^{-1}(r/s) > 0$.  

Thus, for fixed $(r, s) \in S_{\eps_0}$, we have identified the promised point $(x, y) \in \cV$ by $x = f^{-1}(s/r)$ and $y = f^{-1}(r/s)$.  
It remains to show that this fixed
point $(x, y) \in \cV$ satisfies Conditions~$(i)$--$(iii)$ of Theorem~\ref{thm:PW}.

\subsubsection*{Verifying Condition~$(i)$}  
Consider 
${\bf dir}(x, y) = {\rm span}_{\mathbb{C}} \left\{ (xH_x, yH_y) \right\}  
\stackrel{\text{(\ref{eqn:setup})}}{=} {\rm span}_{\mathbb{C}} \left\{ (-xe^{-x}, -y e^{-y}  ) \right\}$.  
Since $x = f^{-1}(s/r)$ and $y = f^{-1}(r/s)$ are solutions of equation (\ref{eqn:findxy}), write 
$$
k = r\frac{e^x}{x} = s \frac{e^y}{y} \quad \implies \quad r = k x e^{-x} \text{ and } s = k y e^{-y}  
\quad \implies \quad (r, s) \in {\bf dir}(x, y), 
$$
as promised.  
To see 
that $(x, y)$ is strictly minimal, we will apply the following lemma (which we prove in the next subsection).  

\begin{lemma}\label{lemma:strmin}
If $(a, b) \in \cV \cap (0, 1)^2$
(cf. equation (\ref{eqn:setup})),   
then 
$(a, b)$ is strictly minimal on $\cV$.  
\end{lemma}

\begin{proof}[Proof of Lemma \ref{lemma:strmin}]
Fix $(a, b) \in \cV \cap (0, 1)^2$, where we recall 
from equation (\ref{eqn:setup})    
that 
$$
\V = 
\left\{(x, y) \in \mathbb{C}^2: H(x, y) = e^{-x} + e^{-y} - 1 = 0 \right\}.     
$$
To show that $(a, b)$ is stictly minimal on $\cV$, 
we prove that 
\begin{equation}
\label{eqn:strmin1}
\min_{z \in \ol{D}(0, a)} \Re e \, e^{-z}
\text{ 
is achieved at only $z = a$}, 
\end{equation}  
and so symmetrically, 
$$
\min_{w \in \ol{D}(0, b)} \Re e \, e^{-w}
\text{ 
is achieved at only $w = b$}.  
$$
As such, if $(z, w) \in \ol{D}(0, a) \times \ol{D}(0, b)$ satisfies $(z, w) \neq (a, b)$, we have 
$$
\Re e \, H(z, w) = \Re e \, e^{-z} + \Re e \, e^{-w} - 1 > 
e^{-a} + e^{-b} - 1  = 0,   
$$
in which case $H(z, w) \neq 0$ and so $(z, w) \not\in \cV$.

To show the statement (\ref{eqn:strmin1}), we 
solve for all $z \in \ol{D}(0, a)$ which 
minimize 
$\Re e \, e^{-z}$ (and show that only $z = a$ works).  
We 
begin by making the following initial considerations.  
For $z \in \ol{D}(0, a)$, we write 
$z = u + i v$ so that 
$u^2 + v^2 \leq a^2$ and 
$\Re e \, e^{-z} = e^{-u} \cos v$.  Note that $\Re e \, e^{-z} = e^{-u} \cos v$ is a harmonic function (it is the real part of an analytic function).  As such, the maximum principle \cite[Sec. 6.2]{Ahlfors} ensures 
that the minimum value of $\Re e \, e^{-z} = e^{-u} \cos v$
over $\ol{D}(0, a)$ is attained at the boundary $\partial D (0, a) = \{z \in \ol{D}(0, a): |z|^2 = u^2 + v^2 = a^2\}$.  
As such, we 
will always assume $v \neq 0$, for otherwise
$z = u = \pm a$, and 
the following hold:  
\begin{enumerate}
\item
$z = -a$ does not minimize 
$\Re e \, e^{-z} = e^{-u} \cos v$ 
(since $e^a > e^{-a}$ with $a > 0$), 
\item
$z = a$ is what we
promise in the statement (\ref{eqn:strmin1}).
\end{enumerate}  

To minimize 
$\Re e \, e^{-z} = e^{-u} \cos v$
subject to $|z|^2 = u^2 + v^2 = a^2$, 
we use the Lagrange multiplier rule:
for a scalar $\lambda \in \mathbb{R}$, set 
\begin{equation}
\label{eqn:Lagrange}
\nabla \left(e^{-u} \cos v\right) = \lambda \nabla \left(u^2 + v^2\right) 
\quad \implies \quad 
\left(- e^{-u} \cos v, - e^{-u} \sin v\right) = \left(2 \lambda u, 2 \lambda v\right).  
\end{equation}  
In light of the rule (\ref{eqn:Lagrange}), 
we may now also assume $u \neq 0$, for otherwise, $\cos v = 0$ implies $0 \neq v = k \pi/ 2$ for some (necessarily nonzero) $k \in \mathbb{Z}$, and so 
$1 > a = v^2 = k^2 \pi^2/4 \geq \pi^2/ 4> 2$.  
At this stage of our analysis, we will have 
proven the statement (\ref{eqn:strmin1}) if we can show 
that 
\begin{equation}
\label{eqn:strmin2}
\text{no $z = u + i v \in \partial \ol{D}(0, a)$ with $u \neq 0 \neq v$ satisfy equation (\ref{eqn:Lagrange}).}  
\end{equation}

Indeed, 
with $u \neq 0 \neq v$, we may 
rewrite equation (\ref{eqn:Lagrange}) to say 
\begin{equation}
\label{eqn:polar0}  
\frac{\sin v}{v} = -2 \lambda e^{-u} = \frac{\cos v}{u} \quad \implies \quad v = u \tan v, 
\end{equation}  
which we now rewrite in polar coordinates.  For $\theta \in [0, 2\pi)$, let $u = a \cos \theta$ and $v = a \sin \theta$, where we may assume from $u \neq 0 \neq v$ that $\theta \not\in \{0, \pi/2, \pi, 3\pi / 2\}$.  
Then equation (\ref{eqn:polar0}) is equivalent to 
\begin{equation}
\label{eqn:polar}
a \sin \theta = a \cos \theta \tan (a \sin \theta) \quad \implies \quad \tan \theta = \tan (a \sin \theta) \quad \implies \quad a \sin \theta = \theta + k \pi \text{ where $k \in \mathbb{Z}$}.  
\end{equation}
We now investigate the possible values of $k \in \mathbb{Z}$ in equation (\ref{eqn:polar}).  Since $a \in (0, 1)$
and $\theta \in (0, 2\pi)$, we have 
\begin{equation}
\label{eqn:trig}
|\theta + k \pi| = a |\sin \theta|  
< |\sin \theta| 
< 
\min \{1, \theta\}  
\end{equation}  
and so only $k \in \{-1, -2\}$ are possible.   
However, $k \neq - 1$ because the two sides of $a \sin \theta = \theta - \pi$ will have opposite signs.   
Moreover, $k \neq -2$ because
$$
2\pi - \theta = -a \sin \theta = a \sin (-\theta) = a \sin (2\pi - \theta) \leq a |\sin (2\pi - \theta)| < |\sin (2\pi - \theta)| \leq 2\pi - \theta.
$$
Thus, we have proven the statement (\ref{eqn:strmin2}), and hence
the statement (\ref{eqn:strmin1}), which concludes the 
proof of Lemma \ref{lemma:strmin}.  
\end{proof}

To apply Lemma~\ref{lemma:strmin}, we have already noted that $x, y > 0$ (cf.~Fact~1), so it remains to show that $x, y < 1$.  
Now, since 
$(r, s) \in S_{\eps_0}$ (cf. equation (\ref{eqn:sector})), we have $\tfrac{s}{r}, \tfrac{r}{s} > \eps_0$, where we note from equation (\ref{eqn:sector}) 
that $f(1) = \eps_0$.  Thus, since $f$ is monotone decreasing, we must have $x = f^{-1}(s/r) < 1$ and 
$y = f^{-1}(r/s) < 1$ so that, by Lemma~\ref{lemma:strmin}, $(x, y)$ is strictly minimal.

\begin{figure}[t]
    \begin{center}
    \includegraphics[scale=.27]{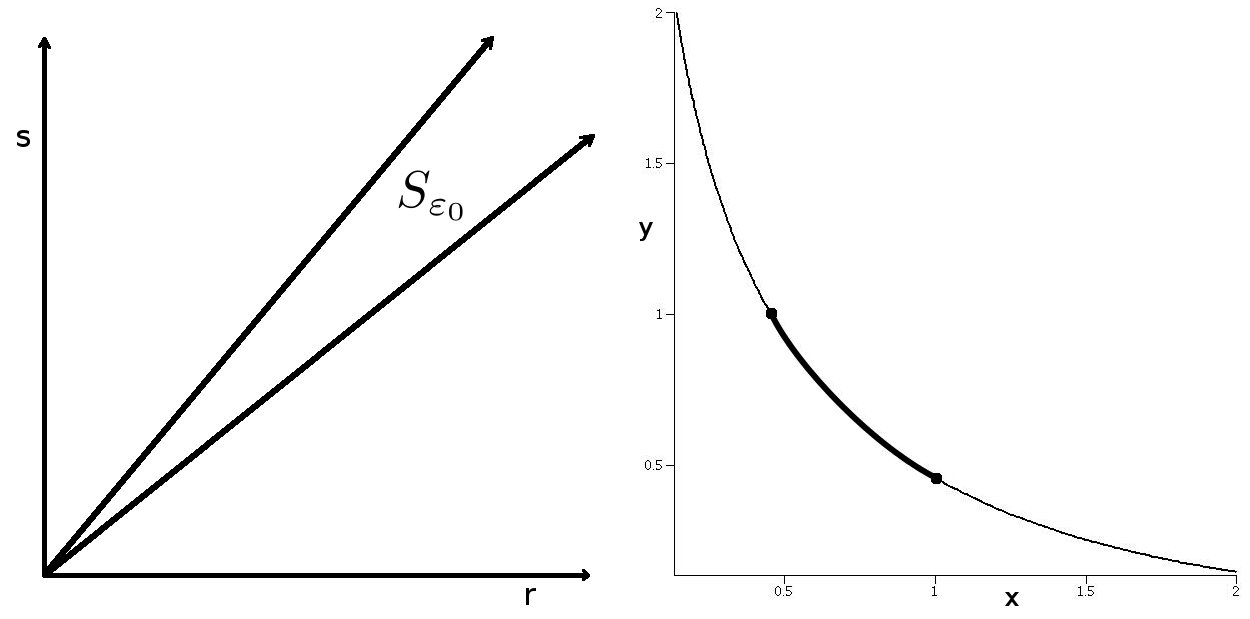}
    \end{center}
    \caption{The Sector $S_{\e_0}$ in the $rs$-plane (left), and the real section of $\cV$ (right) in the $xy$-plane with the set of minimal points $(x,y)$ corresponding to $S_{\e_0}$ plotted in bold.}
    \label{fig:Both}
\end{figure}

\subsubsection*{Verifying Condition~$(ii)$}  
It is easy to verify Condition~$(ii)$.  Indeed, by the Inverse Function Theorem, 
$(x, y) \in \cV$ varies smoothly as $(r, s) \in S_{\eps_0}$ varies.  Moreover, $(x, y) \in \cV$ varies over
the trace of the curve $(f^{-1}(t),f^{-1}(1/t))$ for $t \in [\eps_0, 1/\eps_0]$ (see Figure \ref{fig:Both}), which is the continuous image of a compact set.

\subsubsection*{Verifying Condition~$(iii)$}  
It is slightly tedious to verify Condition~$(iii)$.  
To begin, note that the function $G(x,y) = e^{-y}$ 
from equation (\ref{eqn:setup}) is never zero.
We show that the function $Q(x,y)$ is always positive on $\cV \cap (\mathbb{R}^+ \times \mathbb{R}^+)$.  
It is easy to check that 
\begin{multline*}  
Q(x,y) = -y^2H_y^2xH_x-yH_yx^2H_x^2-x^2y^2(H_y^2H_{xx}+H_x^2H_{yy}-2H_xH_yH_{xy})  \\
= 
xy e^{-x-y} \left[ y e^{-y}+xe^{-x}-xy(e^{-y}+e^{-x}) \right], 
\end{multline*}  
which we defined 
in equation (\ref{eqn:functionQ}).    
Now, 
$x  > 0$ and 
$y > 0$ 
imply that $xye^{-x-y} > 0$, so we will disregard this factor.  Moreover, in the expression above, $e^{-y} + e^{-x} = 1$ on $\cV$, so we consider
the simpler function
$$
P(x,y) = ye^{-y} + xe^{-x} - xy, 
$$
and prove that $P(x,y)$ is positive on $\cV \cap (\mathbb{R}^+ \times \mathbb{R}^+)$.  
Using $1 + x < e^x$ on $\cV$, we have 
\begin{multline*}  
1 + x < e^x = \frac{1}{1 - e^{-y}} \quad \implies \quad x < \frac{e^{-y}}{1-e^{-y}} \quad \implies \\
P(x, y) = 
ye^{-y} - x \left(y - e^{-x}\right) 
> 
ye^{-y} -  
\left(\frac{e^{-y}}{1-e^{-y}}\right)   
\left(y - e^{-x}\right) 
= 
\frac{e^{-y}}{1-e^{-y}}
\left(e^{-x} - ye^{-y} \right).  
\end{multline*}  
Clearly, the first factor above is positive on $y > 0$, so it suffices to consider $R(x,y) = e^{-x} - y e^{-y}$.  However, on $\cV$, we have 
$R(x,y) =1 - e^{-y} (1 + y) > 0$, 
since $1 + y < e^y$.  
This confirms Condition~$(iii)$.  \\

We now conclude the proof of Theorem~\ref{thm:multivar}.  
We apply Theorem~\ref{thm:PW} to 
$e^{-y}/(e^{-x} + e^{-y} - 1)$ 
(cf. equations (\ref{eqn:goal}) and (\ref{eqn:setup})) to conclude that for $r, s \to \infty$ with $(r, s) \in S_{\eps_0}$, 
$$
A_{r,s} = \left(e^{-y} + O(s^{-1/2}) \right) \frac{1}{\sqrt{2\pi}} x^{-r} y^{-s} \sqrt{\frac{ye^y}{sQ(x,y)}}  
$$  
and so, by equation (\ref{eqn:finalline}), we have 
$$
[b^{r-1}a^s] = 
r! s! A_{r,s} 
=
r! s! \left(e^{-y} + O(s^{-1/2}) \right) \frac{1}{\sqrt{2\pi}} x^{-r} y^{-s} \sqrt{\frac{ye^y}{sQ(x,y)}},   
$$
as promised.

\section{Proof of Theorem~\ref{thm:diag}}  
\label{sec:diag}  

The proof of Theorem~\ref{thm:diag} involves specializing Theorem~\ref{thm:multivar} 
to the case $r - 1 = \lfloor (n-1)/2 \rfloor$ and $s = \lceil (n-1)/2 \rceil$, and therefore, consists of calculations.  
We split the proof into two cases, depending on the parity of $n$, and begin with the easier (cleaner) case.  

\subsection{Proof of Theorem~\ref{thm:diag} ($n$ is even)}  
When $n$ is even, we have $r = s = n/2$, and so 
$x = y = f^{-1}(1)$, where $f$ is the function defined in 
equation (\ref{eqn:functionf}).
As we saw in the proof of Theorem~\ref{thm:multivar}, $(x, y)$ is a strictly minimal point on~$\cV = \{(x, y) \in \mathbb{C}^2: e^{-x} + e^{-y} - 1 = 0\}$, and so 
we easily calculate $x = y = \log 2$.  
It remains to substitute $r = s = n/2$ and $x = y = \log 2$ into the asymptotic expression 
of Theorem~\ref{thm:diag}.  
We proceed in a piecemeal way.  

To begin, note first that with $x = y = \log 2$
and $r = s = n/2$, we have 
\begin{equation}
\label{eqn:diagasym1}  
Q(x, y) = \frac{1}{4} (1 - \log 2) \log^3 2 \quad \implies \quad 
x^{-r} y^{-s} \sqrt{\frac{ye^{-y}}{sQ(x,y)}} = \frac{2}{\log^{n+1} 2} \cdot \frac{1}{\sqrt{n (1 - \log 2)}}.  
\end{equation}  
With $r = s = n/2$, we use Stirling's approximation to conclude 
\begin{equation}
\label{eqn:diagasym2}  
\frac{1}{\sqrt{2\pi}} r! s! = 
\frac{1}{\sqrt{2\pi}} \left(\left(\frac{n}{2}\right)!\right)^2 = \left(1 + O\left(\frac{1}{n}\right)\right) \frac{\sqrt{n}}{2^{n+1}} n!.    
\end{equation}
With $y = \log 2$ and $s = n/2$, the error term of Theorem~\ref{thm:multivar} is 
\begin{equation}
\label{eqn:diagasym3}  
e^{-y} + O (s^{-1/2}) = \frac{1}{2} + O \left(\frac{1}{\sqrt{n}}\right).  
\end{equation}
Multiplying equations (\ref{eqn:diagasym1})--(\ref{eqn:diagasym3}) together yields  
$$
\left[b^{\lfloor (n-1)/2 \rfloor} a^{\lceil (n-1)/2 \rceil}\right]
= 
\left( \frac{1}{2 \log 2\sqrt{  (1 - \log 2)}} + O\left(\frac{1}{\sqrt{n}}\right)  \right) \left( \frac{1}{ 2 \log 2 } \right)^n n!, 
$$
which is slightly stronger than promised.

\subsection{Proof of Theorem~\ref{thm:diag} ($n$ is odd)}  
The calculations here are similar, and we claim 
that equations (\ref{eqn:diagasym1})--(\ref{eqn:diagasym3}) still hold as long as we dampen them to include some error factor of $(1 + o(1))$, where $o(1) \to 0$ as $n\to \infty$.
When $n$ is odd, we have $r = (n+1)/2 = s + 1$, and 
it is routine to update equation (\ref{eqn:diagasym2}) to say 
\begin{equation}
\label{eqn:diagasym2'} 
\frac{1}{\sqrt{2\pi}} r! s! = 
\frac{1}{\sqrt{2\pi}} 
\left(\frac{n+1}{2}\right)! \left(\frac{n-1}{2}\right)!  
= 
(1 + o(1)) \frac{\sqrt{n}}{2^{n+1}} n!.  
\end{equation}   
Now, 
\begin{equation}
\label{eqn:Taylorxy}  
x = f^{-1}\left(\frac{s}{r}\right) = 
f^{-1} \left(1 - \frac{2}{n+1} \right) \quad \text{and} \quad 
y = f^{-1}\left(\frac{r}{s}\right) = 
f^{-1} \left(1 + \frac{2}{n-1} \right), 
\end{equation}  
and so by the continuity of $f^{-1}$, we have 
$x, y \to f^{-1}(1) = \log 2$ as $n \to \infty$.   
By continuity alone, we may 
update equation (\ref{eqn:diagasym3}) and parts  
of equation (\ref{eqn:diagasym1}) to say 
\begin{equation}
\label{eqn:diagasym3'}
e^{-y} + O (s^{-1/2}) = 
\frac{1}{2} + o(1) 
\end{equation}  
and 
\begin{equation}
\label{eqn:diagasym1'}
Q(x, y) = (1 + o(1)) \frac{1}{4} (1 - \log 2) \log^3 2  \quad \implies \quad 
\sqrt{\frac{ye^{-y}}{sQ(x,y)}} = (1 + o(1)) \frac{2}{\log 2} \cdot \frac{1}{\sqrt{n (1 - \log 2)}}.  
\end{equation}
With its large exponents, 
we need to be slightly more careful with the 
the factor $x^{-r} y^{-s}$ in equation (\ref{eqn:diagasym1}).

The function $f$ defined in equation (\ref{eqn:functionf}) is analytic, invertible, and $f'$ is nonzero.  
As such, $f^{-1}$ is analytic, and so we use  Taylor's theorem to perform a linear approximation of $f^{-1}(z)$ near $a = 1$.  
Set $d = (f^{-1})'(1)$.  
We update equation (\ref{eqn:Taylorxy}) to say   
\begin{multline}  
\label{eqn:TaylorxyUpdate}
x = 
f^{-1} \left(1 - \frac{2}{n+1} \right) 
= 
f^{-1}(1) - d \frac{2}{n+1} + \Theta\left(\frac{1}{n^2}\right) 
= 
\log 2 - d \frac{2}{n+1} + \Theta\left(\frac{1}{n^2}\right) \\
\text{and} \quad 
y = 
f^{-1} \left(1 + \frac{2}{n-1} \right)
= 
f^{-1}(1) + d \frac{2}{n-1} + \Theta\left(\frac{1}{n^2}\right)
= \log 2 + d \frac{2}{n-1} + \Theta\left(\frac{1}{n^2}\right), 
\end{multline}  
and so
$xy = \log^2 2 + \Theta(1/n^2)$.   Now, with $r = (n+1)/2 = s + 1$, 
\begin{multline}  
\label{eqn:diagasym1''}
x^r y^s = 
x (xy)^s = 
x \left( \log^2 2 + \Theta\left(\frac{1}{n^2}\right) \right)^s  
\stackrel{(\ref{eqn:TaylorxyUpdate})}{=}  
\left(  
\log^{2s+1} 2\right) 
\left(1 + \Theta\left(\frac{1}{n} \right) \right)  
\left( 1 + \Theta\left(\frac{1}{n^2}\right) \right)^{\frac{n-1}{2}}  \\
= 
(1 + o(1)) 
\log^n 2 
\quad 
\implies \quad 
x^{-r} y^{-s} = \frac{1}{\log^n 2} (1 + o(1)).  
\end{multline}  
Now, multiplying equations (\ref{eqn:diagasym2'}),   
(\ref{eqn:diagasym3'}),  
(\ref{eqn:diagasym1'}), and   
(\ref{eqn:diagasym1''})  
yields 
$$
\left[b^{\lfloor (n-1)/2 \rfloor} a^{\lceil (n-1)/2 \rceil}\right]
= 
\left( \frac{1}{2 \log 2\sqrt{  (1 - \log 2)}} + o(1)  \right) \left( \frac{1}{ 2 \log 2 } \right)^n n!, 
$$
as promised.


\section*{Appendix: Proof of Fact~\ref{fact1}}

Recall that we wish to prove $f(t) = t^{-1} (1 - e^t) \log (1 - e^{-t})$ is a strictly decreasing bijection from $\mathbb{R}^+$ onto $\mathbb{R}^+$.  
To that end, it is straightforward to check that 
$\lim_{t \to 0^+} f(t) = + \infty$
and $\lim_{t \to +\infty} f(t) = 0$
(the numerator of the second limit tends to 1).  By continuity, we immediately conclude that $f: \mathbb{R}^+ \to \mathbb{R}^+$ is onto.

The remainder of the Appendix is reserved to proving that $f: \mathbb{R}^+ \to \mathbb{R}^+$ is strictly decreasing, and in particular, that $f'$ is negative on $\mathbb{R}^+$.    
In what follows, we make the substitution $t = \log p$, where $p \in (1, \infty)$, and we consider
the function 
$$
F(p) = f(\log p) 
= 
\frac{(1-p)\log(1-\frac{1}{p})}{\log p}
= 
\frac{(1-p) \left( \log (p-1) - \log p \right)}{\log p}  
= 
\frac{(1-p)\log (p-1)}{\log p}  
- (1 - p).   
$$
We will show that $F'$ is negative on $(1, \infty)$, and since 
$t = \log p$ is increasing in $p \in (1, \infty)$, we will infer that $f'$ is negative on $\mathbb{R}^+$.  
This will conclude our proof.

Observe that 
$$
F'(p) - 1 = 
\frac{1}{\log^2 p}  \left[ \log p \left[(-1) \log (p-1) + (1-p) \frac{1}{p-1} \right] - (1 - p) \log (1- p) \cdot \frac{1}{p} \right],   
$$
so that 
$$
F'(p) = 
- 
\frac{p (\log p) \log(p-1) + p \log p + (1- p) \log(p-1) - p \log^2 p}{p \log^2 p}.  
$$
We show that the numerator
$$
N(p) = 
p (\log p) \log(p-1) + p \log p + (1- p) \log(p-1) - p \log^2 p  
$$
is positive on $(1, \infty)$, which then implies that $F'$ is negative on $(1, \infty)$.  
For that, we first note that $N(p) \to 0^+$ as $p \to 1^+$.  As such, it suffices to show that $N'$ is positive on $(1, \infty)$.

Observe that 
\begin{eqnarray*}  
N'(p) & = & (\log p) \log(p-1) + \log (p - 1) + \frac{p}{p-1} \log p + \log p + 1 - \log (p-1) - 1 - \log^2 p - 2 \log p \\
& = & 
(\log p) \left[\frac{1}{p-1}  - \log \left(1 + \frac{1}{p-1} \right) \right].    
\end{eqnarray*}  
Note that $\log p > 0$ on $(1, \infty)$.  The second factor above is also positive, since $\log (1 + x) < x$ holds for all $x = 1/(p-1) \in \mathbb{R}$.


\bibliographystyle{amsplain}

\begin{thebibliography}{}

\bibitem{Ahlfors}
L. V. Ahlfors, \emph{Complex analysis: An introduction of the theory of analytic functions of one complex variable}, Second edition, McGraw-Hill Book Co., New York-Toronto-London, 1966.

\bibitem{Andre}
D. Andr\'e, \emph{D\'eveloppement de $\sec x$ and $\tan x$}, C. R. Math. Acad. Sci. Paris 88 (1879), 965-979.

\bibitem{Alseda}
L. Alseda, J. Llibre, M. Misiurewicz, \emph{Combinatorial Dynamics and Entropy in Dimension One}, second ed., Advanced Series in Nonlinear Dynamics, vol. 5, World Scientific Publishing Co., Inc., River Edge, NJ, 2000.

\bibitem{BabsonSteingrimsson}
E. Babson, E. Steingr\'{\i}msson, \emph{Generalized permutation patterns and a classification of the Mahonian statistics}, Sém. Lothar. Combin. 44 (2000) Art. B44b, 18 pp.
\bibitem{BCDK}
M. Bousquet-M\'elou, A. Claesson, M. Dukes, S. Kitaev,
\emph{$(2+2)$-free posets, ascent sequences and pattern avoiding permutations}, J. Comb. Thy. Series A, 117 (2010), 884-909.

\bibitem{ClEhr}
E. Clark, R. Ehrenborg, \emph{Explicit expressions for the extremal excedance set statistics}, Eur. J. Comb., 31, No. 1, (2010), 270-279.

\bibitem{CSZ}
R. J. Clarke, E. Steingr\'{\i}msson, and J. Zeng, \emph{New Euler-Mahonian statistics on permutations and words}, Adv. Appl. Math. 18 (1997), 237-270.

\bibitem{CLN2013}
J. Cooper, E. Lundberg, B. Nagle \emph{Generalized Pattern Frequency in Large Permutations}, The Electronic Journal of Combinatorics, 20 (2013), P28.

\bibitem{deBruijn}
N. G. de Bruijn, \emph{Permutations with given ups and downs}, Nieuw Arch. Wisk. 18 (1970), 61-65.

\bibitem{ED}
E. Deutsch, private communication

\bibitem{EhrSt}
R. Ehrenborg, E. Steingr\'{\i}msson \emph{The excedance set of a permutation}, Adv. in Appl. Math. 24 (2000), 284-299.

\bibitem{Eli}
S. Elizalde, \emph{Asymptotic enumeration of permutations avoiding generalized patterns}, Adv. in Appl. Math. 36 (2006), 138-155.

\bibitem{EliNoy}
S.~Elizalde, M.~Noy, {\it Consecutive patterns in permutations}, Adv.~in Appl.~Math. 30 (2003), 110--125.  

\bibitem{FS}
D.~Foata and M.~P.~Sch\"utzenberger, {\it Th\'eorie g\'eom\'etriques des
polyn$\hat{o}$mes eul\'eriens}, Lecture Notes in Math., Vol.~138, Springer-Verlag,
Berlin, 1970.

\bibitem{Lund}
E. Lundberg, \emph{Almost all orbit tyes imply period-3}, Topology and its Applications, 
154, No. 14 (2007), 2741-2744.

\bibitem{MacMahon}
P. A. MacMahon, \emph{Combinatory Analysis}, Vols. 1 and 2, Cambridge Univ. Press, Cambridge, UK, 1915 (reprinted by Chelsea, New York, 1955).

\bibitem{Niven}
I. Niven, \emph{A combinatorial problem of finite sequences}, Nieuw Arch. Wisk. 16 (1968), 116-123.


\bibitem{PemWil2002}
R. Pemantle, M. C. Wilson, \emph{Asymptotics of multivariate sequences: I. smooth points of the singular variety}, 
Journal of Combinatorial Theory, Series A 97 (2002), 129-161.

\bibitem{PemWil2008}
R. Pemantle, M. C. Wilson, \emph{Twenty Combinatorial Examples of Asymptotics Derived from Multivariate Generating Functions}, 
SIAM Rev. 50 (2008), 199–272.

\bibitem{PemWil2013}
R. Pemantle, M. C. Wilson, \emph{Analytic Combinatorics in Several Variables}, Cambridge Studies in Advanced Mathematics, Cambridge University Press, 2013.


\bibitem{Shark}
A. N. Sharkovsky, \emph{Coexistence of cycles of a continuous map of a line into itself}, Ukrain.
Mat. Zh. 16 (1964), 61-71 (Russian); English translation, Internat. J. Bifur. Chaos Appl.
Sci. Engrg. 5 (1995), 1263-1273.

\bibitem{OEIS}
N.J.A. Sloane, 
\emph{On-Line Encyclopedia of Integer Sequences. Published electronically at http://oeis.org/}, 
sequence A136127.



\end{thebibliography}

\end{document}